\newtheorem{theorem}{Theorem}
\newtheorem{lemma}{Lemma}
\newtheorem{definition}{Definition}
\newtheorem{corollary}{Corollary}
\newtheorem{remark}{Remark}
\newtheorem{observation}{Observation}
\newtheorem{notation}{Notation}
\newtheorem{conjecture}{Conjecture}
\renewcommand{\geq}{\geqslant}
\renewcommand{\leq}{\leqslant}
\def\_phi{\varphi}
\title{On  forest and bipartite cuts \\ in sparse graphs}
\author{Ilya I. Bogdanov$^1$, Elizaveta Neustroeva$^2$,  Georgy Sokolov$^3$, \\ Alexey Volostnov$^4$,  Nikolay Russkin$^2$, Vsevolod Voronov$^5$}
\begin{document}

%\large

\maketitle

\footnotetext[1]{PhD, Moscow Institute of Physics and Techology, associate professor}
\footnotetext[2]{Moscow Institute of Physics and Techology, graduate student}
\footnotetext[3]{Moscow Institute of Physics and Techology, postgraduate student}
\footnotetext[4]{Moscow Institute of Physics and Techology, assistant professor}
\footnotetext[5]{PhD, Moscow Institute of Physics and Techology, Caucasus Mathematical Center of Adyghe State University, senior researcher}
    
\begin{abstract}
The paper is devoted to sufficient conditions for the existence of vertex cuts in simple graphs, where the induced subgraph on the cut vertices belongs to a specified graph class. 
In particular, we show that any connected graph with $n$ vertices and fewer than $(19n - 28)/8$ edges admits a forest cut. This result improves upon recent bounds, although it does not resolve the conjecture that the sharp threshold is $3n - 6$ (Chernyshev, Rauch, Rautenbach, 2024).  Furthermore, we prove that if the number of edges is less than $(80n-134)/31$, then the graph admits a bipartite cut.
\end{abstract}

\section{Introduction}

Let $G = (V, E)$ be a connected undirected graph without loops or multiple edges. A subset $M \subset V$ is called a \emph{vertex cut} if the removal of $M$ disconnects the graph $G$.

We denote by $G|_M$ the subgraph of $G$ induced on the vertex set $M$. This notation is used instead of the standard $G[M]$ to improve the readability of formulas. If $G|_M$ is empty (i.e., $M$ is an independent set), we say that $G$ has an \emph{independent cut}. Similarly, if $G|_M$ is a forest or a bipartite graph, we refer to $G$ as having a \emph{forest cut} or a \emph{bipartite cut}, respectively.

The problem considered in this paper originates from the theory of separators in graphs, a well-established area with numerous significant results for various classes of graphs~\cite{lipton1977applications, gilbert1984separator, alon1990separator, fox2010separator}. Separator theorems typically state that removing a subset $M \subset V$ leaves any connected component of $G|_{V \setminus M}$ with less than $cn$ vertices, where $0 < c < 1$,  and that the cardinality of the separator $M$ is $O(\sqrt{n})$ or $O(\sqrt{n} \log n)$. These results play an important role in computational complexity, as they allow large problems to be recursively reduced to smaller instances~\cite{rosenberg2005graph}.

In the paper by Y. Caro and R. Yuster \cite{caro1999graph}, it was shown that the algorithmic complexity of decomposing a graph $X$ into edge-disjoint subgraphs isomorphic to a given graph $G$ depends on whether $G$ has an independent cut. In \cite{caro1999graph}, the term \emph{robust} was used for graphs without an independent cut. Furthermore, Y. Caro formulated a conjecture, which was later proved in \cite{chen2002note}. We give it as a theorem.

\begin{theorem}[G. Chen, X. Yu, 2002]
\label{thm:independent}
Every graph on $n$ vertices with at most $2n-4$ edges has an independent cut.
\end{theorem}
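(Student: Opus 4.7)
The plan is to proceed by induction on $n$, with the base cases $n \le 4$ handled directly (for $n=4$ and $|E|\le 4$, the graph cannot even be $3$-connected, so a small cut is easily exhibited). For the inductive step, we may immediately assume $G$ is connected (otherwise the empty set is an independent cut) and $2$-connected (otherwise any cut vertex is itself an independent cut). If $G$ has a $2$-vertex cut $\{u,v\}$ with $uv\notin E$, we are done, so we may further assume that every $2$-cut of $G$ consists of two \emph{adjacent} vertices.

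Next, I would try to reduce to the $3$-connected case. Fix a $2$-cut $\{u,v\}$ with $uv\in E$ and decompose $G$ along $\{u,v\}$ into two overlapping subgraphs $G_1,G_2$ with $V(G_1)\cap V(G_2)=\{u,v\}$, each containing the edge $uv$. A direct edge count gives
\[
|E(G_1)|+|E(G_2)|=|E(G)|+1\le 2n-3,\qquad |V(G_1)|+|V(G_2)|=n+2,
\]
so at least one side, say $G_1$, satisfies $|E(G_1)|\le 2|V(G_1)|-4$. By induction, $G_1$ has an independent cut $M_1$, and the remaining work is to modify $M_1$ (possibly adding or removing $u$ or $v$, and exploiting that $G_2$ is attached only through these two vertices) to obtain an independent cut of the whole graph.

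It then remains to handle the $3$-connected case. Here the bound $|E|\le 2n-4$ forces the average degree to be strictly less than $4$, so $G$ contains a vertex $v$ of degree exactly $3$. Let $N(v)=\{a,b,c\}$. By $3$-connectivity, $N(v)$ itself is a vertex cut separating $v$ from the rest of $G$; if $\{a,b,c\}$ is independent we are done. Otherwise some edge lies inside $N(v)$, and I would delete $v$ together with a carefully chosen incident edge, or contract $v$ into one of its neighbors, to produce a smaller graph still satisfying the edge bound, then invoke induction and lift the resulting cut back to $G$. Subcases will appear according to how many edges are induced on $N(v)$ and whether the reduced graph remains connected (or $3$-connected).

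The principal obstacle is that the threshold $2n-4$ is tight: already $K_4$ minus an edge has $5=2n-3$ edges at $n=4$ and admits no independent cut. Consequently the induction has essentially no slack, and each reduction step must decrease the edge count at least twice as fast as the vertex count while preserving enough structure to lift the cut back. The $3$-connected case, where no easy $2$-cut is available and local deletions/contractions can destroy $3$-connectivity or exceed the edge budget, is where I expect the bulk of the case analysis to live.
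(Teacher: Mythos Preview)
The paper does not contain a proof of this theorem: it is quoted as a result of Chen and Yu~\cite{chen2002note} and used as a black box in the estimates of Sections~\ref{sec:forest} and~\ref{sec:bipartite}. So there is no ``paper's own proof'' to compare against, and your proposal must be judged on its own.

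As an outline, the first moves are standard and sound: reducing to the $2$-connected case, disposing of non-adjacent $2$-cuts, and in the $3$-connected case locating a vertex $v$ of degree~$3$. The genuine gap is exactly where you flag it yourself, namely the lifting steps, and these are not just bookkeeping. In the $2$-cut decomposition, an independent cut $M_1$ of $G_1$ need not separate $u$ from $v$ (indeed it cannot, since $uv\in E(G_1)$), so $u$ and $v$ lie in the same component of $G_1\setminus M_1$; but then the \emph{other} component of $G_1\setminus M_1$ is already separated from all of $G_2$ in $G$, and $M_1$ is a cut of $G$ --- so this half is in fact fine once you observe it. The $3$-connected step is the real problem. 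Deleting $v$ gives $G'$ with at most $2(n-1)-5$ edges, and induction yields an independent cut $M'$ of $G'$; but if, say, $a\in M'$ while $b$ and $c$ land in different components of $G'\setminus M'$, then $v$ re-connects those components in $G$, and you cannot add $v$ to $M'$ because $va\in E$. Contracting $v$ into a neighbour has the analogous defect: the preimage of an independent cut can acquire an edge through $v$. Neither of your proposed local moves (delete an extra incident edge, contract) comes with a mechanism that forces at least two of $a,b,c$ to stay together or to lie in $M'$, and the tight bound $2n-4$ leaves no spare edge to spend on such control.

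This is precisely why the Chen--Yu argument is not a one-page induction: the degree-$3$ vertex case with a triangle (or two edges) inside $N(v)$ requires a more global structural argument, not just a local deletion/contraction. Your write-up is an honest sketch of the easy reductions, but the sentence ``lift the resulting cut back to $G$'' hides the entire content of the theorem.
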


In \cite{chen2002note,chen2002fragile}, the term ``fragile'' was used for graphs with an independent cut. The papers \cite{pfender2013,rauch2024} showed that all minimal robust graphs, i.e., graphs with $2n-3$ edges that become fragile after the removing  any edge, can be obtained by gluing triangles $K_3$ and triangular prism graphs along edges and triangles.

The following conjecture was formulated in \cite{chernyshev2024forestcutssparsegraphs}:

\begin{conjecture}[Chernyshev, Rauch, Rautenbach, 2024]
    \label{cnj:main}
    If $G$ is a graph of order $n$ with fewer than $3n-6$ edges, then $G$ has a forest cut.
\end{conjecture}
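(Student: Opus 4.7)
My plan is to combine induction with a case analysis driven by a vertex of small degree, after first reducing to the 3-connected case. If $G$ has a vertex cut $M$ of size at most $2$, then $G|_M$ is an induced subgraph on at most two vertices and is therefore automatically a forest, so there is nothing to prove. Hence assume $G$ is 3-connected. The hypothesis $|E(G)| < 3n - 6$ forces the average degree strictly below $6$, so $G$ contains a vertex $v$ with $d(v) \leq 5$.

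The first candidate forest cut is $M = N(v)$: since $d(v) \leq 5 < n - 1$ for $n \geq 7$ (small cases are handled by hand), removing $N(v)$ isolates $v$ and leaves at least one other vertex on the far side, so $M$ is a genuine cut whenever $G|_{N(v)}$ is a forest. The interesting cases are therefore $d(v) \in \{3, 4, 5\}$ with $G|_{N(v)}$ containing a cycle, which reduces to a finite list of configurations: at degree $3$ the neighborhood must induce $K_3$, forcing $\{v\} \cup N(v)$ to span a $K_4$; at degree $4$ it contains $C_3$ or $C_4$ with possible chords; at degree $5$ it contains $C_3$, $C_4$, or $C_5$ with various chord patterns.

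For each such configuration I would attempt two complementary tactics. The first is local surgery: take an edge $ab$ lying on some cycle of $G|_{N(v)}$ and try to replace $a$ by a carefully chosen second neighbor $a' \in N(a) \setminus (N(v) \cup \{v\})$, so that $M' = (N(v) \setminus \{a\}) \cup \{a'\}$ still separates $\{v, a\}$ from the far side while $G|_{M'}$ has strictly fewer cycle edges; 3-connectivity guarantees internally disjoint paths from $a$ into the rest of the graph and thus produces candidates for $a'$. The second tactic is to delete $v$ (or $v$ together with a well-chosen triangle neighbor) and apply induction: when $d(v) \geq 3$ the deletion preserves the strict inequality $|E(G - v)| < 3(n - 1) - 6$, so $G - v$ has a forest cut $M''$. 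The only obstruction is that reinserting $v$ may bridge two components of $(G - v) - M''$, which one repairs by adding a single vertex of $N(v)$ to $M''$ while verifying that no new cycle appears inside the enlarged cut.

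The main obstacle, and the reason the present paper stops at the weaker bound $(19n - 28)/8$, is the tightness of the conjectured threshold. The extremal graphs are planar triangulations with exactly $3n - 6$ edges: already the octahedron $K_{2,2,2}$ with $n = 6$ has no forest cut, because every vertex neighborhood induces a triangle and every minimum cut is a $4$-cycle. In such graphs no local choice of cut can succeed, and even graphs obtained from a triangulation by deleting one or two edges inherit enough global structure to defeat the naive exchange-and-induction strategy: the second neighbor $a'$ used in the surgery typically re-introduces a chord inside $M'$, and the leftover vertex $v$ in the inductive step tends to bridge exactly the two components produced by $M''$. Closing the remaining gap between $(19n-28)/8$ and $3n - 6$ therefore appears to require a genuinely global structural argument, for instance locating a near-triangulated region bounded by a short separating cycle whose vertex set, possibly after a small perturbation, induces a forest, rather than a further refinement of the local discharging-type analysis used so far.
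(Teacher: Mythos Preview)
The statement you were asked to prove is Conjecture~\ref{cnj:main}, which the paper explicitly leaves \emph{open}: the paper proves only the weaker Theorem~\ref{thm:forest} with threshold $(19n-28)/8$ and states in the conclusion that a proof of the full conjecture seems to require ``fundamentally different ideas''. There is therefore no proof in the paper to compare your attempt against.

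Your proposal is not a proof either, and you say so yourself. What you have written is a heuristic outline (reduce to 3-connected, look at $N(v)$ for a low-degree vertex, try local surgery or induction on $G-v$) followed by an honest assessment of why each tactic breaks down near the extremal examples. That assessment is essentially correct: the octahedron and, more generally, stacked triangulations show that no purely local exchange or single-vertex deletion argument can succeed at the sharp threshold, because every small cut in those graphs contains a triangle. But an outline of failed strategies is not a proof, and the two ``tactics'' you describe have genuine gaps even below the sharp threshold. In the surgery step you assert that a suitable $a'\in N(a)\setminus(N(v)\cup\{v\})$ can be found so that $M'$ is still a cut and $G|_{M'}$ has fewer cycle edges, but 3-connectivity gives you no control over the edges between $a'$ and the remaining vertices of $N(v)$, so you may trade one cycle for another indefinitely. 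In the inductive step you note that adding one neighbor of $v$ to $M''$ may be needed, but you give no argument that a \emph{single} added vertex suffices or that it avoids creating a cycle in $G|_{M''\cup\{w\}}$; in general several neighbors of $v$ may lie in distinct components of $(G-v)-M''$, and patching this can force you to add a set that itself contains a triangle.

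In short: the paper does not prove this statement, your proposal does not prove it, and the obstacles you identify are real. If your goal was to explain why the conjecture is hard, you have done that; if it was to prove the conjecture, there is no argument here.
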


The work \cite{chernyshev2024forestcutssparsegraphs} also noted that the conjecture holds for planar graphs, and planar triangulations with $3n-6$ edges are one example of critical graphs in this problem. Furthermore, a family of non-planar critical graphs can be obtained by gluing triangulations along $K_3$ and $K_4$ cliques. These graphs have no forest cut, but a forest cut appears after the removal of any edge.

Over the past year, various authors have obtained linear bounds weaker than those required by the conjecture. Below are the edge count constraints under which the existence of a forest cut has been proven:

\begin{itemize}
    \item $|E|<\frac{11n-18}{5}$ \cite{chernyshev2024forestcutssparsegraphs},    
    \item $|E|<\frac{9n-15}{4}$ \cite{botler2024extremalproblemsforestcuts},
    \item $|E|\leq 2n$ \cite{cheng2024sparsegraphsindependentforesty}.
\end{itemize}

In this paper, we prove the following bound:

\begin{theorem}
\label{thm:forest}
    If $G$ is a graph of order $n\geq 4$ with fewer than $\frac{19n-28}{8}$ edges, then $G$ has a forest cut.
\end{theorem}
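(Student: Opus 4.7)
The plan is to argue by contradiction. Take a counterexample $G$ with the smallest possible number $n$ of vertices (and, among those, with the fewest edges). The overall strategy is first to constrain the local structure of $G$ severely via a series of reductions, and then to derive an edge-count contradiction by a counting or discharging argument.

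Several elementary reductions force $G$ to be richly connected. Any cut of size at most two induces a forest, so $G$ must be $3$-connected; likewise, any $3$-cut must be a triangle, otherwise it is already a forest cut. Next, if some vertex $v$ has $\deg v \leq 2$, its neighborhood $N(v)$ is a forest and is a cut whenever $n \geq 4$, so it is a forest cut; similarly, a vertex $v$ of degree $3$ with $G|_{N(v)} \neq K_3$ yields a forest cut. The remaining case, $\deg v = 3$ with $N(v) = K_3$, is handled by deleting $v$: then $G - v$ is still connected (the triangle keeps $N(v)$ together), meets the edge bound for $n-1$ vertices, and so has a forest cut $M'$ by minimality. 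Because $N(v) \setminus M'$ consists of pairwise adjacent vertices in $G - v$, they lie in one component of $(G - v) \setminus M'$, and reintroducing $v$ cannot merge components, so $M'$ is a forest cut of $G$, a contradiction. Therefore $\delta(G) \geq 4$.

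The main effort is the analysis of degree-$4$ vertices. Since $\delta(G) \geq 5$ would give $2|E| \geq 5n$ and contradict the edge bound, degree-$4$ vertices must exist. For each such $v$, the set $N(v)$ is a $4$-cut, so $G|_{N(v)}$ must contain a cycle; otherwise it is a forest cut. A case analysis over the possible shapes of $G|_{N(v)}$ --- a triangle, a $4$-cycle, or denser patterns --- is carried out; in every case one either extracts a forest cut directly, or deletes $v$ together with an auxiliary vertex or edge, applies induction to the reduced graph to obtain a forest cut, and lifts it back to $G$ by a variant of the argument from the degree-$3$ case. The outcome is a list of local configurations forbidden in $G$ that is rich enough to force a high effective average degree.

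The proof concludes with a discharging (or equivalent weighted-counting) argument. Each vertex starts with charge $\deg v$ and transfers a prescribed fraction to each degree-$4$ neighbor, with the transfer rules dictated by the forbidden-configuration analysis. One verifies that after discharging every vertex has charge at least $19/4$, so $2|E| \geq 19n/4 - O(1)$, contradicting $|E| < (19n-28)/8$. The main obstacle is the structural step: while it is immediate that $G|_{N(v)}$ must contain a cycle for every degree-$4$ vertex $v$, the lifting argument can be spoiled by reintroducing $v$ to a reduced graph when doing so merges distinct components of the lifted cut. Each configuration therefore requires a tailored reduction whose deleted elements guarantee that the reintroduced vertex cannot destroy separation, and the discharging rules must be tuned delicately so that the inevitable losses add up to exactly the fraction $19/8$.
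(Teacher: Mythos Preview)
Your outline is a reasonable \emph{template} --- minimal counterexample, local structural analysis around low-degree vertices, then discharging --- but as written it is not a proof: neither the case analysis for the shapes of $G|_{N(v)}$ nor the discharging rules are actually carried out, and you yourself concede that ``the discharging rules must be tuned delicately'' without saying what they are. That is precisely the hard part.

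More importantly, the paper's argument does \emph{not} proceed by discharging, and it relies on a global ingredient that your proposal omits entirely: the Chen--Yu theorem that any graph with at most $2n-4$ edges has an \emph{independent} cut. The paper first establishes (via a sequence of cut lemmas ruling out $K_3$-, $K_4$-, $D$-, and $C_4$-cuts) that every $4$-cut of a minimal counterexample induces $T_0$ or $T_1$; from this it deduces that no two degree-$4$ vertices lie in a common $K_4$, hence the degree-$4$ vertices $U$ induce a matching. It then proves a lifting lemma: if $G|_{V\setminus U}$ has an independent cut, then $G$ has a forest cut. Applying Chen--Yu to $G|_{V\setminus U}$ gives $e(G|_{V\setminus U})\geq 2(n-|U|)-3$, and since removing $U$ deletes at most $\tfrac{7}{2}|U|$ edges one gets $e(G)\geq 2n+\tfrac{3}{2}|U|-3$. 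Averaging this (weight $\tfrac14$) with the degree bound $e(G)\geq \tfrac{5}{2}n-\tfrac12|U|$ (weight $\tfrac34$) yields $e(G)\geq \tfrac{19}{8}n-\tfrac34$, the desired contradiction.

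The point is that the coefficient $19/8$ arises exactly from this linear combination, and the $2(n-|U|)-3$ term comes from a \emph{global} theorem about independent cuts, not from local discharging. A pure discharging scheme based only on ``$G|_U$ is a matching'' does not obviously reach $19/4$ per vertex: a degree-$4$ vertex has at least three neighbours of degree $\geq 5$ and needs $3/4$ of extra charge, but a degree-$5$ vertex donating $1/4$ to each degree-$4$ neighbour could drop to $15/4$, since nothing in your structural list bounds the number of degree-$4$ neighbours of a degree-$5$ vertex. So unless you supply additional forbidden configurations strong enough to close that gap --- and you have not --- the discharging step does not go through.
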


In addition to forest cuts, we also consider the case of bipartite cuts. Note that if Conjecture~\ref{cnj:main} holds, then the edge bound that implies the existence of a bipartite cut cannot differ from that for a forest cut. Indeed, a 3-tree on $n$ vertices has exactly $3n - 6$ edges, and any vertex cut in a 3-tree necessarily contains a triangle, hence cannot induce a bipartite graph. Motivated by this observation, we propose the following relaxed 

\begin{conjecture}
If a graph $G$ with $n \geq 4$ vertices has fewer than $3n - 6$ edges, then $G$ admits a bipartite cut.
\end{conjecture}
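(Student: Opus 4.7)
The plan is to proceed by strong induction on $n$, extending the minimum-degree analysis behind Theorem~\ref{thm:forest} while exploiting the greater flexibility that ``bipartite'' grants over ``forest.'' Since the edge bound forces average degree strictly less than $6$, there is always a vertex $v$ with $\deg(v) \leq 5$, and the argument is driven by a case analysis on $\deg(v)$ and on the induced graph $H := G|_{N(v)}$. The cases $\deg(v) \in \{1,2\}$ are immediate: $N(v)$ induces at most one edge and already separates $v$ from the rest of $G$, so it is itself a bipartite cut.

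For $\deg(v) \in \{3,4,5\}$, the key observation is that $G|_M$ merely needs to avoid odd cycles; triangles in $H$ do not force $M$ to grow as they do in the forest-cut setting. I would therefore seek $M \subseteq N[v]$ with $G|_M$ bipartite. If $H$ is triangle-free then $M = N(v)$ works directly; otherwise one can try to delete a small set of vertices from $N(v)$ to kill every odd cycle of $H$, verifying that the remainder still separates $v$, or else substitute in a vertex from $N^{2}(v)$. When no such local cut exists, one contracts $v$ with a carefully chosen part of $N[v]$ and applies the inductive hypothesis to the resulting graph $G'$ on $n' < n$ vertices with fewer than $3n'-6$ edges, lifting the bipartite cut back to $G$ afterwards.

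The principal obstacle is the family of near-extremal graphs: graphs obtained by gluing $K_4$'s (and $K_5$'s) along triangles, the same configurations that obstruct Conjecture~\ref{cnj:main} for forest cuts. In such graphs every small local cut around a low-degree vertex necessarily contains a triangle, yet the bipartite setting offers a genuinely richer toolkit --- $4$-cycles, $K_{2,t}$, and even $K_{4}$ minus an edge are all admissible in $M$. The decisive maneuver will be a local ``bipartization'' step that swaps one vertex of an unwanted triangle in a candidate cut for a nonadjacent vertex one step farther out, formalized through a discharging scheme in which only triangles (rather than all cycles) carry charge. I expect this to be the hardest part of the argument: ruling out a pathological configuration in which every candidate cut through $N^{2}[v]$ for $\deg(v) \in \{4,5\}$ is forced to contain a triangle will require a lengthy case analysis on the structure of $G|_{N^{2}[v]}$, and it is precisely here that ideas beyond those used in the proof of Theorem~\ref{thm:forest} are likely to be needed in order to close the gap from $\frac{80n-134}{31}$ all the way to $3n-6$.
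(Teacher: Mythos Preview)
This statement is Conjecture~2 in the paper, and the paper does \emph{not} prove it; the authors explicitly leave it open and establish only the weaker Theorem~\ref{thm:bipartite} with threshold $\frac{80n-134}{31}$. There is therefore no paper-side proof to compare your attempt against.

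Your proposal is not a proof either --- it is a plan whose final paragraph concedes that the decisive step (ruling out configurations in which every candidate cut through $N^{2}[v]$ contains a triangle for $\deg(v)\in\{4,5\}$) is unresolved and will need ``ideas beyond those used in the proof of Theorem~\ref{thm:forest}.'' That concession \emph{is} the content of the conjecture. Note also that the paper's general framework already applies at $(\alpha,\beta)=(3,6)$: by Lemma~\ref{4_connectivity} a minimal counterexample is $4$-connected, so your ``immediate'' cases $\deg(v)\in\{1,2,3\}$ never occur, and by Corollary~\ref{4_degree} the neighbourhood of every degree-$4$ vertex is $T_0$ or $T_1$, hence always contains a triangle. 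Thus the very first nontrivial case in your outline is exactly the obstacle the paper could push only to slope $\tfrac{80}{31}$. The induction/contraction step is likewise unspecified: you do not say which subset of $N[v]$ is contracted, why the resulting $G'$ still satisfies $e(G')<3|G'|-6$, or how a bipartite cut of $G'$ lifts back to $G$ (contracting across an odd cycle may create bipartite cuts in $G'$ with no bipartite preimage in $G$). In short, the outline correctly locates the difficulty but supplies no argument to overcome it; as written it has the same status as the conjecture itself.
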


The bound we are able to establish in the case of bipartite cuts is weaker than the one proposed in Conjecture~\ref{cnj:main}, but it is significantly better than the bound for forest cuts:

\begin{theorem}
\label{thm:bipartite}
Let $G$ be a graph on $n \geq 4$ vertices. If $G$ has fewer than $\frac{80n - 134}{31}$ edges, then $G$ has a bipartite cut.
\end{theorem}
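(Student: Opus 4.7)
The plan is to argue by contradiction and induction on $n$. Let $G$ be a counterexample with the smallest possible number of vertices, so that every vertex cut $M$ of $G$ has $G|_M$ containing an odd cycle. First I would reduce to a high-connectivity setting: any vertex cut of size at most $2$ induces a subgraph with at most one edge and is therefore bipartite, so $G$ must be $3$-connected; moreover, any $3$-cut of $G$ must induce a triangle, for otherwise we obtain a bipartite cut directly. If a separating triangle $\{x,y,z\}$ is present, I would split $G$ along it into two strictly smaller subgraphs and compare edge counts: summing $(80|V_i|-134)/31$ over the two sides produces more slack than $|E|+3$ provides, so one side satisfies the hypothesis of the theorem and yields, by induction, a bipartite cut that can be extended to one of $G$.

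Next I would use a case analysis of low-degree vertices to rule out several local configurations. For a vertex $v$ of degree $d\le 4$, the neighborhood $N(v)$ is itself a vertex cut; if $G|_{N(v)}$ is bipartite we are done, so otherwise $G|_{N(v)}$ contains an odd cycle, which for $d\in\{3,4\}$ forces a triangle $xyz\subseteq N(v)$. One can then either replace one vertex of this triangle by $v$ to obtain a bipartite cut, or argue that a suitable small subgraph around $v$ is reducible (its deletion or contraction produces a smaller graph satisfying the hypothesis, whose bipartite cut lifts back to one of $G$). After such analysis I would conclude $\delta(G)\ge 5$ and exclude specific adjacent configurations of vertices of degrees $5$ and $6$, driven by the target ratio $80/31$.

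The final step is a discharging argument with target average degree $160/31 \approx 5.16$. Set the initial charge $\mu(v) = \deg(v)$, so that $\sum_{v\in V}\mu(v) = 2|E|$. I would prescribe transfer rules under which high-degree vertices donate fractional charge to low-degree neighbors, chosen so that the reducible substructures excluded in the previous step guarantee every vertex ends with final charge at least $160/31$. Summing yields
\[
2|E| = \sum_{v\in V}\mu(v) \ge \frac{160n-268}{31},
\]
contradicting the hypothesis $|E| < (80n-134)/31$.

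The main obstacle, I expect, is the reducibility analysis for the low-degree configurations. Bipartiteness is a more global constraint than acyclicity: a candidate cut may contain several odd cycles, and local surgery that repairs one odd cycle can create another, so the set of provably reducible configurations is subtler than in the forest-cut proof of Theorem~\ref{thm:forest}. Making these configurations plentiful enough, and the corresponding discharging rules tight enough, to reach the exact ratio $80/31$ — rather than a weaker one — will be the delicate combinatorial core of the argument.
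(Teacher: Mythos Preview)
Your proposal is a high-level plan rather than a proof, and the decisive step is left entirely open. The assertion ``conclude $\delta(G)\ge 5$'' is the crux, and you give no reducibility argument for it. In fact the paper's analysis of a minimal counterexample only yields $\delta(G)\ge 4$: a degree-$4$ vertex $v$ must have $G|_{N(v)}\cong T_0$ or $T_1$ (a triangle plus a pendant or isolated vertex), and such vertices are \emph{not} eliminated by any local reduction. Your suggested move ``replace one vertex of this triangle by $v$ to obtain a bipartite cut'' does not work, because the resulting $4$-set need not be a cut at all. Consequently the discharging step, which you calibrate to target charge $160/31$ assuming $\delta(G)\ge5$ and unspecified forbidden $5$--$6$ configurations, is built on an unproved foundation; you neither name the reducible configurations nor state the transfer rules, so there is nothing to verify.

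The paper's actual argument is entirely different and does not use discharging. After establishing $4$-connectivity and the $T_0/T_1$ structure of degree-$4$ neighbourhoods, it shows (via a $K_{2,3}$-contraction lemma specific to the bipartite case) that the set $U$ of degree-$4$ vertices induces a matching. It then chooses an independent set $A$ maximizing $e(A,V\setminus A)$, observes that $G|_{V\setminus A}$ can have no independent cut (else $A$ together with it would be a bipartite cut of $G$), and invokes the Chen--Yu theorem to get $e(G|_{V\setminus A})\ge 2|V\setminus A|-3$. A ``red edge'' counting argument---edges from $A$ to vertices of $B$ with at least two $A$-neighbours---yields three linear inequalities in $n,k,x,y,z$ (numbers of degree-$4$ and degree-$5$ vertices in $A$, etc.), and a weighted combination with coefficients $1,12,9$ produces $31\,e(G)>80n-134$, the desired contradiction. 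The essential external input is the independent-cut theorem, and the essential internal idea is the maximal independent set plus linear programming; neither appears in your outline.
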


As in~\cite{chernyshev2024forestcutssparsegraphs}, our bounds are derived through the analysis of a linear programming problem. However, our approach differs in that, in Sections~\ref{sec:prop_cut} and~\ref{sec:min_counter}, we establish structural properties of minimal counterexamples across an infinite family of related problems, including the assumptions of Conjecture~\ref{cnj:main}. Sections~\ref{sec:forest} and~\ref{sec:bipartite} then present additional constructions and constraints specific to the forest and bipartite cases, respectively. In proving the main results, we demonstrate the infeasibility of the associated system of inequalities for the selected parameter values.

To the best of our knowledge, the problem of determining a linear edge bound that guarantees the existence of a bipartite cut has not been previously considered.

Note that a related problem was studied in~\cite{bonsma2012extremal}, where it was shown that any graph with $n$ vertices and fewer than $\lceil (3n - 1)/2 \rceil$ edges admits an \emph{edge cut} that forms a matching. Furthermore, the paper~\cite{bonsma2012extremal} provides a classification of the extremal graphs with exactly $\lceil (3n - 1)/2 \rceil$ edges for which no such matching cut exists.

	\section{Preliminaries}

We use standard notation, except that $G|_M$ denotes the induced subgraph on the vertex set $M$.

Let $V(G)$ denote the vertex set of the graph $G$, $E(G)$ its edge set, $K_n$ the complete graph on $n$ vertices, and $C_n$ the cycle on $n$ vertices. For convenience, we introduce a notation for some small graphs. Let $T_0$ be the 4-vertex graph consisting of a triangle and an isolated vertex, $T_1$ the 4-vertex graph consisting of a triangle with an extra vertex connected to a triangle vertex, and $D$ the ``diamond graph'' $K_4 \setminus e$ (Fig.~\ref{fig:4vtx}). Let $N(v)$ denote the neighborhood of vertex $v$ (the relevant graph is usually clear from the context). Let $|G|$ be the number of vertices in the graph $G$, and $e(G)$ be the number of edges \cite{bollobas2013modern}.

\begin{figure}[ht]
    \centering
    \includegraphics[width=8cm]{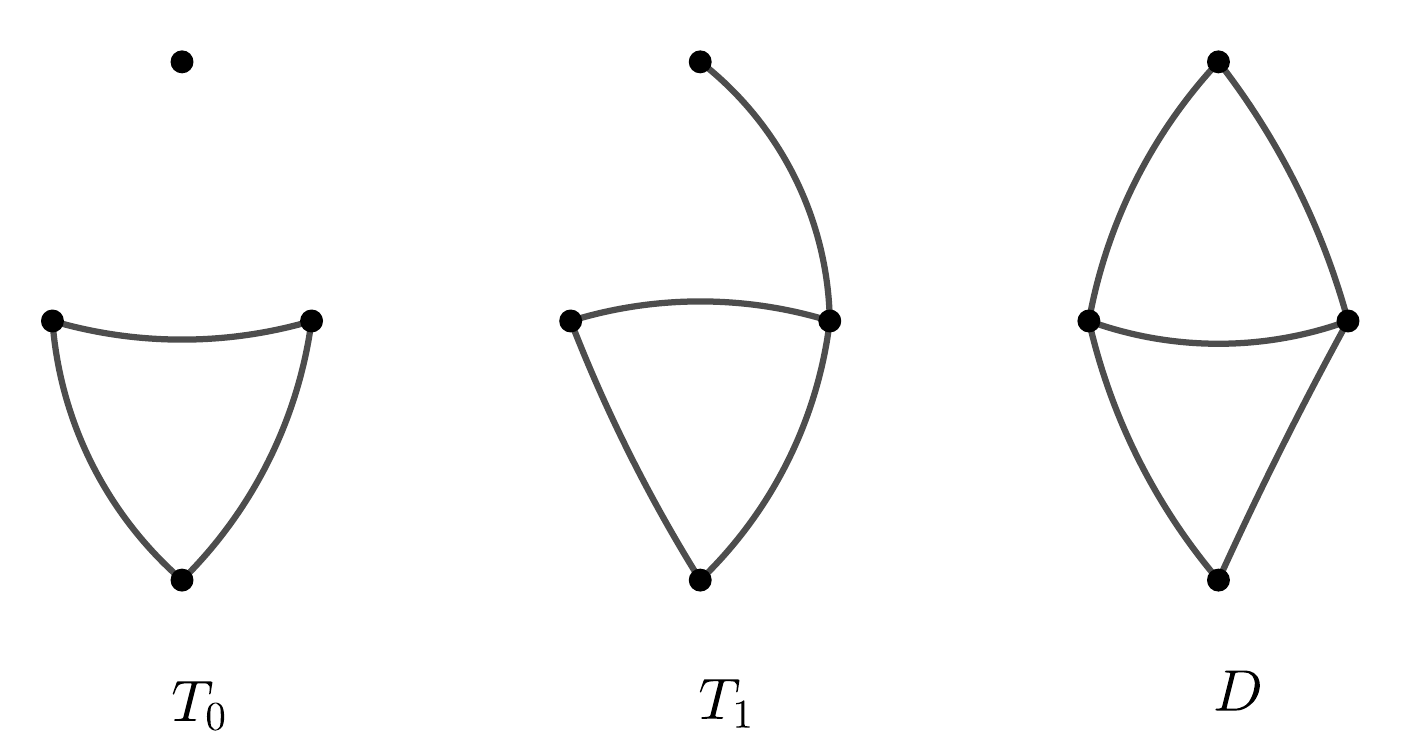}
    \caption{Three graphs that will appear as induced subgraphs on the cut}
    \label{fig:4vtx}
\end{figure}

Since our arguments will frequently involve considering (not necessarily connected) subgraphs resulting from $G \setminus M$, we introduce the following:

\begin{definition}
    A separation of a graph $G$ is a triple  $(M, L, R)$ satisfying: \[V(G) = M \sqcup L \sqcup R,\] where the vertex sets $L$ and $R$ are non-empty, and no vertex in $L$ is adjacent to any vertex in $R$. Thus, $M$ is a vertex cut of $G$, while each of $L$ and $R$ represents the union of vertices from one or more connected components into which $M$ splits $G$.
\end{definition}

We will use the following terminology: $M$ is called a \emph{cut}, while the triple $(M,L,R)$ is called a \emph{separation}. Unlike studies on graph separators~\cite{caro1999graph,rosenberg2005graph}, we do not impose bounds on \( |M|, |L|, |R| \). For fragile graphs, such estimates  were studied in \cite{chen2002fragile}.

\begin{definition}
    A quality function parameterized by  $\alpha, \beta \in \mathbb{R}$, is defined as:
    \[
    q_{\alpha,\beta}(G) = \alpha |G| - e(G) - \beta.
    \]
\end{definition}

A crucial role in our arguments is played by the set of parameter pairs $(\alpha, \beta)$ satisfying:
\begin{equation}   
\label{eq:cond_ab}
    2 < \alpha \leq 3, \quad
    4\alpha - \beta = 6.    
\end{equation}

This set forms a half-interval on the line $4\alpha - \beta = 6$, including the point $(3,6)$ but excluding $(2,2)$. Some statements are proved for any pair $(\alpha,\beta)$ satisfying \eqref{eq:cond_ab}. Occasionally, we will specialize to cases relevant to our main results, namely:

\begin{itemize}
    \item $(\alpha, \beta) = \left(\frac{19}{8}, \frac{28}{8}\right)$ for forest cuts (Theorem \ref{thm:forest}),
    \item $(\alpha, \beta) = \left(\frac{80}{31}, \frac{134}{31}\right)$ for bipartite cuts (Theorem \ref{thm:bipartite}),
    \item $(\alpha, \beta) = (3, 6)$ in Conjecture \ref{cnj:main}.
\end{itemize}

In these three cases, conditions $\eqref{eq:cond_ab}$ are satisfied. Therefore, statements proved under the assumption of \eqref{eq:cond_ab} will hold in our cases of interest.

Let $\mathfrak{F}$ denote the set of all forests on any number of vertices, and $\mathfrak{B}$ the set of bipartite graphs. We will need the following:

\begin{observation}
    \label{obs:hereditary}
    Forests $\mathfrak{F}$ and bipartite graphs $\mathfrak{B}$ are hereditary graph classes, i.e. every subgraph of a graph in the class also belongs to the class. Furthermore, these classes are closed under gluing along the edge.
\end{observation}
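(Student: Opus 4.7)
The observation has four parts to verify: hereditariness of $\mathfrak{F}$ and $\mathfrak{B}$, and edge-gluing closure for each. All four are essentially definitional, so my plan is to dispose of them briefly rather than invoke any machinery.

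For hereditariness, the arguments are one-liners. A graph $F$ is a forest iff it contains no cycle; deleting vertices or edges cannot create a cycle, so every subgraph of $F$ is again acyclic. A graph $G$ is bipartite iff it admits a proper $2$-coloring $\chi\colon V(G)\to\{0,1\}$; the restriction of $\chi$ to any vertex subset is still a proper $2$-coloring of the corresponding subgraph. I would simply state each of these in a sentence.

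For the gluing statements, let $G_1,G_2$ share exactly an edge $e=uv$ and its two endpoints, and set $G=G_1\cup G_2$. For forests, I would show that no new cycle can be created: any hypothetical cycle $C$ in $G$ either lies entirely in some $G_i$ (contradicting that $G_i$ is a forest) or uses edges from both sides of the gluing. In the latter case, since $\{u,v\}$ is the only vertex overlap, $C$ decomposes as $P_1\cup P_2$ where $P_i$ is a $u$-$v$ path in $G_i$; at least one $P_i$ avoids the edge $e$, and then $P_i\cup\{e\}$ is a cycle in $G_i$, again a contradiction. For bipartite graphs, I would take proper $2$-colorings $\chi_i$ of $G_i$, note that $\chi_i(u)\neq\chi_i(v)$ for each $i$, and, after swapping the two colors in $\chi_2$ if necessary, arrange that $\chi_1$ and $\chi_2$ agree on $\{u,v\}$; they then combine into a single well-defined proper $2$-coloring of $G$.

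Nothing in this plan is a real obstacle; the only mildly non-trivial ingredient is the short case distinction for the hypothetical cycle in the forest-gluing argument.
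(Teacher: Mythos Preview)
Your proposal is correct in all four parts, and the arguments you give are the natural elementary ones. The paper itself offers no proof of this observation at all: it is stated as a bare fact and used later without further justification, so there is nothing to compare against beyond noting that your write-up supplies the standard verification the paper omits.
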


Since similar arguments are often needed for different graph classes, the following notation helps to avoid repetition:

\begin{definition}
    Let $\Psi$ be a graph class. A $\Psi$-cut is a vertex cut $M$ in the graph $G$ where the induced subgraph $G|_{M}$ belongs to the class $\Psi$. If the graph induced on the vertices of a cut is isomorphic to $H$, where $H$ is a particular graph, then we will call such a cut a $H$-cut.
\end{definition}

We introduce a general definition that allows us to prove several auxiliary statements:

\begin{definition}
\label{def:min_examples_family}
The family of minimal counterexamples $\mathcal{G}(\Psi, \alpha, \beta)$ is the set of all graphs $G$ satisfying:
\begin{itemize}
    \item $|G| \geq 4$,
    \item $q_{\alpha,\beta}(G) > 0$,
    \item $G$ has no $\Psi$-cut,
    \item $G$ has the smallest number of vertices among all graphs with the above three properties.
\end{itemize}
\end{definition}

Note that the first condition eliminates trivial cases. In the following, we will assume $\Psi=\mathfrak{F}$ or $\Psi=\mathfrak{B}$. Some proofs are identical for both graph classes. Since the counterexamples are different in general, it is not  always possible to prove a statement for one class and then use the fact that $\mathfrak{F} \subset \mathfrak{B}$.

\section{Properties of cuts and separations}
\label{sec:prop_cut}

In this section, we assume that $H$ is an arbitrary graph and $q(G)=q_{\alpha,\beta}(G)$ is a quality function satisfying conditions \eqref{eq:cond_ab}.

\begin{lemma}[``Modularity of the quality function''] \label{cost_modularity}
    If $(M, L, R)$ is a separation of the graph $H$, then \[q(H) = q(H|_{L \cup M}) + q(H|_{R \cup M}) - q(H|_{M}).\]
\end{lemma}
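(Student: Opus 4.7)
The plan is to prove the identity by checking additivity of each of the three constituents of $q$ separately: the vertex count $|\cdot|$, the edge count $e(\cdot)$, and the constant term $\beta$. Since $q_{\alpha,\beta}$ is a linear combination of these, once each ingredient is shown to satisfy the corresponding inclusion-exclusion identity, the result follows by taking the same linear combination on both sides.

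First I would observe that $V(H) = M \sqcup L \sqcup R$ is a disjoint union, so $|H| = |M|+|L|+|R|$, while $|H|_{L\cup M}| = |L|+|M|$, $|H|_{R\cup M}| = |R|+|M|$, and $|H|_M| = |M|$. This gives
\[
|H| = |H|_{L\cup M}| + |H|_{R\cup M}| - |H|_M|.
\]
Next I would use the defining property of a separation, namely that no edge of $H$ has one endpoint in $L$ and the other in $R$. Consequently every edge of $H$ has both endpoints in $L\cup M$ or both in $R\cup M$, and the edges counted in both of these induced subgraphs are exactly those with both endpoints in $M$. Hence
\[
e(H) = e(H|_{L\cup M}) + e(H|_{R\cup M}) - e(H|_M).
\]
Finally the trivial identity $\beta = \beta + \beta - \beta$ completes the three ingredients.

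Multiplying the vertex identity by $\alpha$, subtracting the edge identity, and subtracting the $\beta$ identity yields exactly the claim. There is no real obstacle here; the only thing to be careful about is to invoke the hypothesis that $L$ and $R$ are non-adjacent when splitting the edge set, since otherwise edges between $L$ and $R$ would be missed entirely by the two induced subgraphs on the right-hand side. The statement holds for arbitrary $\alpha,\beta\in\mathbb{R}$, so the conditions \eqref{eq:cond_ab} play no role in this particular lemma.
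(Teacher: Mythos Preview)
Your proof is correct and follows essentially the same approach as the paper: both arguments verify the identity by checking inclusion--exclusion separately for the vertex count and the edge count (using that no edge runs between $L$ and $R$), then combine these linearly. Your remark that the conditions~\eqref{eq:cond_ab} are irrelevant here is also accurate.
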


\begin{proof}
    By the inclusion-exclusion principle:        
    \[|H| = |L \cup M| + |R \cup M| - |M|.\] 
    Since $(M, L, R)$ is a separation of $H$, there are no edges between $L$ and $R$, hence: 
    \[E(H) = E(H|_{L \cup M}) \cup E(H|_{R \cup M}).\]
    Therefore,
    \[
    \begin{split}
    e(H) &= e(H|_{L \cup M}) + e(H|_{R \cup M}) - e(H|_{(L \cup M) \cap (R \cup M)})\\
    &= e(H|_{L \cup M}) + e(H|_{R \cup M}) - e(H|_{M}).
    \end{split}
    \]
    Consequently,
    \[
    \begin{split}
     q(H|_{L \cup M}) &+ q(H|_{R \cup M}) - q(H|_{M})  =\alpha(|L \cup M| + |R \cup M| - |M|)  \\
     &-(e(H|_{L \cup M}) +e(H|_{R \cup M}) - e(H|_{M})) - \beta = q(H). \qedhere
    \end{split} 
    \] 
\end{proof}

\begin{lemma} \label{cut_lemma_1}
    If $(M, L, R)$ is a separation of the graph $H$ and $S \subset (L \cup M)$ is a cut of $H|_{L \cup M}$, then either $S$ is a cut of $H$ or $S \cap M$ is a cut of $H|_{M}$.
\end{lemma}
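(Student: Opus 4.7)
The plan is to work contrapositively: assume $S$ is \emph{not} a cut of $H$ (so $H\setminus S$ is connected) and deduce that $S\cap M$ disconnects $H|_M$. The key leverage is that $S\subseteq L\cup M$, so $S$ does not touch $R$, and the only way vertices of $L$ can communicate with vertices of $R$ in $H\setminus S$ is through $M\setminus S$, because the separation hypothesis forbids direct $L$-$R$ edges.

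First, since $S$ is a cut of $H|_{L\cup M}$, the induced graph $H|_{(L\cup M)\setminus S}$ decomposes into connected components $C_1,\ldots,C_k$ with $k\ge 2$. I would then argue that every $C_i$ must intersect $M$: otherwise $C_i\subseteq L$, and since no vertex of $L$ has a neighbor in $R$, and no edge of $H|_{L\cup M}$ joins $C_i$ to any other $C_j$, the set $C_i$ would be a connected component of $H\setminus S$ disjoint from $R\cup\bigcup_{j\ne i}C_j$. But $L\cup R\ne\emptyset$ guarantees the latter set is nonempty, so $H\setminus S$ would be disconnected, contradicting the assumption. Hence $M\cap C_i\ne\emptyset$ for every $i$.

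Now consider the restriction $H|_{M\setminus S}$. Its vertex set decomposes as $M\setminus S = \bigsqcup_{i=1}^k (M\cap C_i)$, and each block is nonempty. Moreover, there is no edge of $H$ between $M\cap C_i$ and $M\cap C_j$ for $i\ne j$, because such an edge would also lie in $H|_{L\cup M}\setminus S$ and would merge the two components $C_i$ and $C_j$. Consequently $H|_{M\setminus S}$ has at least $k\ge 2$ components, which is precisely the statement that $S\cap M$ is a cut of $H|_M$.

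The only real subtlety is step 3, where one must carefully use both parts of the separation hypothesis (that $L,R$ are nonempty \emph{and} that there are no $L$-$R$ edges) to rule out the pathological situation where a component of $H|_{L\cup M}\setminus S$ lies entirely in $L$. Everything else is bookkeeping with induced subgraphs. I do not foresee needing the quality function, the conditions \eqref{eq:cond_ab}, or Lemma~\ref{cost_modularity} for this statement — it is a purely structural fact about separations.
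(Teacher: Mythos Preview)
Your argument is correct. Both you and the paper argue contrapositively, and the underlying connectivity reasoning is the same; the difference is purely organizational. The paper assumes \emph{both} conclusions fail, takes two vertices $u,v$ in different components of $H|_{(L\cup M)\setminus S}$, finds a path between them in $H\setminus S$ (which must enter $R$ and hence pass through $M$), and then replaces the $R$-excursion by a path inside $H|_{M\setminus S}$ to reach a contradiction. You instead assume only that $S$ is not a cut of $H$, analyse the components $C_1,\dots,C_k$ of $H|_{(L\cup M)\setminus S}$, show each one meets $M$, and read off that $H|_{M\setminus S}$ is disconnected. These are two equivalent packagings of the same idea; neither is materially shorter or more general.

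One tiny slip: where you write ``$L\cup R\ne\emptyset$ guarantees the latter set is nonempty'', what you actually need (and what the separation definition gives you) is $R\ne\emptyset$, since $R$ is disjoint from $C_i\subseteq L$ and from $S\subseteq L\cup M$. This is harmless for the argument.
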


\begin{figure}[ht]
    \centering
    \includegraphics[width=8cm]{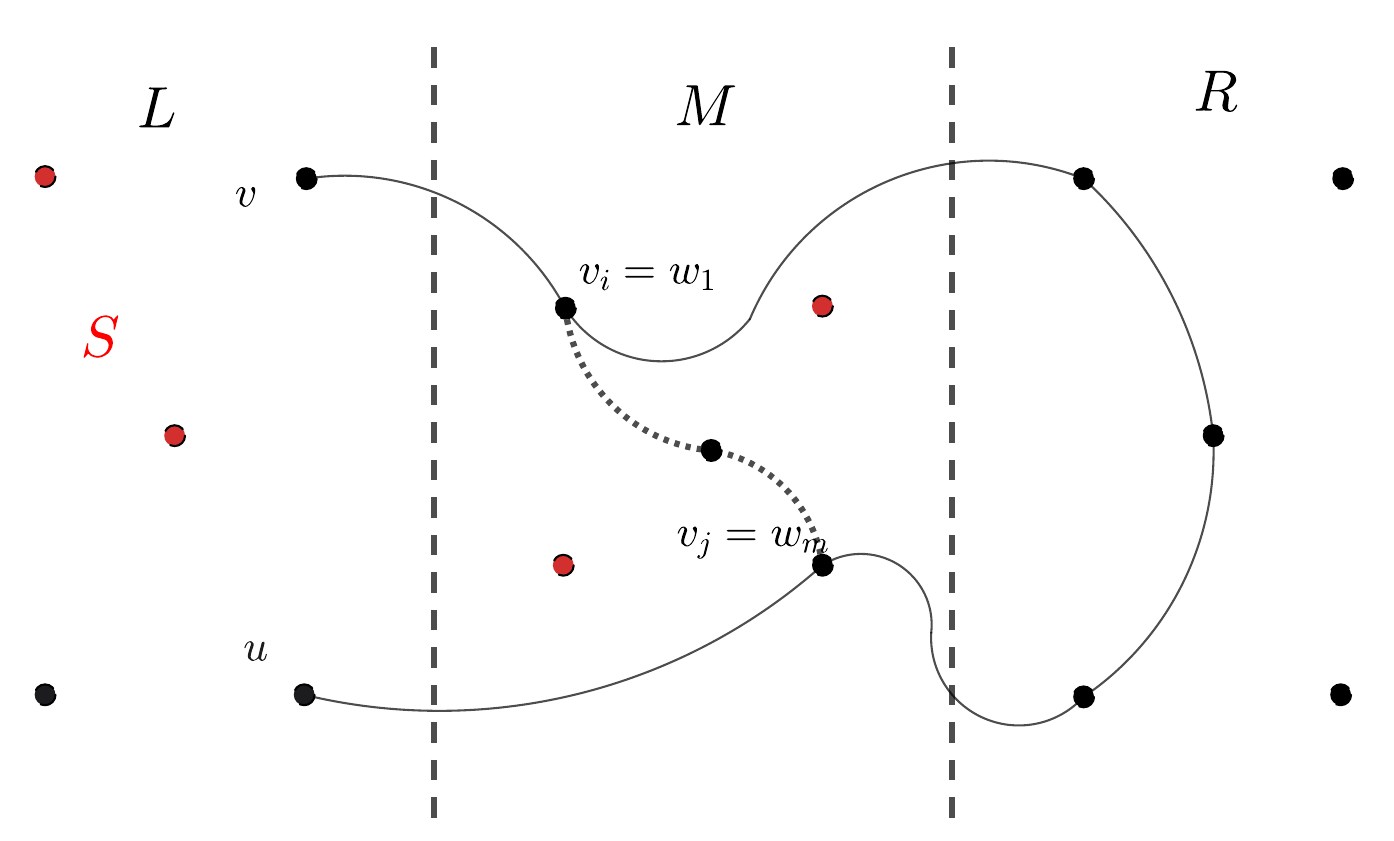}
    \label{fig:lemma2}
    \caption{Shortening the path in Lemma \ref{cut_lemma_1}. The cut vertices are colored red.}
\end{figure}

\begin{proof}
    Suppose that neither $S$ is a cut of $H$ nor $S \cap M$ is a cut of $H|_{M}$. Let $u$ and $v$ be vertices in different connected components of $H|_{(L \cup M) \setminus S}$. Let $v = v_1, v_2 \ldots, v_s=u$ be a path between them in $H|_{V(H) \setminus S}$ (which exists since $S$ is not a cut of $H$). By the choice of $u$ and $v$, this path passes through $R$ and, since $(M, L, R)$ is a separation of $H$, it must pass through $M$ (Fig.~\ref{fig:lemma2}). 
    
    Let $v_i$ and $v_j$ be the first and last vertices from $M$ on this path, respectively. Then all vertices before $v_i$ and after $v_j$ lie in $L \cup M$ (as $(M, L, R)$ is a separation). Let $v_i = w_1, w_2 \ldots, w_m = v_j$ 
    be a path between $v_i$ and $v_j$ in $H|_{M \setminus S}$ (which exists since $S \cap M$ is not a cut of $H|_{M}$). Then 
    \[v_1, \ldots, v_i=w_1, \ldots ,w_m=v_j, \ldots ,v_s\] 
    forms a path between $v$ and $u$ in $H|_{(L \cup M) \setminus S}$, contradicting the choice of $u$ and $v$.
\end{proof}

\begin{lemma} \label{cut_lemma_2}
    Let $(M, L, R)$ be a separation of graph $H$, $S_{L} \subset (L \cup M)$ is a cut of $H|_{L \cup M}$, and $S_{R} \subset (R \cup M)$ is a cut of $H|_{R \cup M}$. Furthermore, let $S_L \cap M = S_R \cap M$ and $|M \setminus S_L| = 2$. Then $S = S_L \cup S_R$ is a cut of $H$.
\end{lemma}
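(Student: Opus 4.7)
The plan is to exhibit two nonempty disjoint subsets of $V(H)\setminus S$ with no edges between them in $H$. The two vertices $m_1,m_2\in M\setminus S_L$ (the common intersection $S_L\cap M=S_R\cap M$ makes this well defined) are the only vertices of $M$ present in $H\setminus S$, so they are the only places where the $L$- and $R$-sides of the separation $(M,L,R)$ can remain glued together after the removal of $S$.

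First I would check the bookkeeping identities $S\cap(L\cup M)=S_L$ and $S\cap(R\cup M)=S_R$, which follow from $S_L\subseteq L\cup M$, $S_R\subseteq R\cup M$ and $S_L\cap M=S_R\cap M$. Consequently the restriction of $H\setminus S$ to $L\cup M$ equals $H|_{L\cup M}\setminus S_L$, and the restriction to $R\cup M$ equals $H|_{R\cup M}\setminus S_R$. I would then split into two cases. Easy case: some component $C$ of $H|_{L\cup M}\setminus S_L$ (or symmetrically on the other side) contains no vertex of $M$. Then $V(C)\subseteq L$; the separation property kills edges from $C$ to $R$, while $C$ being a full component of its side kills edges from $C$ to the rest of $(L\cup M)\setminus S_L$. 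Hence $V(C)$ is a union of components of $H\setminus S$, and since $m_1\in V(H)\setminus S$ lies outside $V(C)$, we are done in this case.

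In the remaining case, each of $H|_{L\cup M}\setminus S_L$ and $H|_{R\cup M}\setminus S_R$ has exactly two components---at most two because $|M\setminus S_L|=2$, and at least two because $S_L,S_R$ are cuts---and $m_1,m_2$ land in different components of each. Label them so that $m_i$ lies in the component $A_i$ on the $L$-side and $B_i$ on the $R$-side, with $A_i\cap M=B_i\cap M=\{m_i\}$. The candidate components of $H\setminus S$ are $A_1\cup B_1$ and $A_2\cup B_2$; they are nonempty, pairwise disjoint (since e.g.\ $A_1\cap B_2\subseteq M$ reduces to $\{m_1\}\cap\{m_2\}=\varnothing$), and jointly exhaust $V(H)\setminus S$. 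The step I expect to be the most care-intensive is checking that no edge of $H$ joins these two sets: an edge inside $L\cup M$ between $A_1$ and $A_2$ would contradict their being distinct components of $H|_{L\cup M}\setminus S_L$---in particular no vertex of $A_1$ is adjacent to $m_2$---and the symmetric statement rules out edges inside $R\cup M$; any remaining $L$--$R$ edge is forbidden by $(M,L,R)$ being a separation. Once this bookkeeping is complete, $S$ is seen to be a cut of $H$.
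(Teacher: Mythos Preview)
Your proof is correct and follows essentially the same gluing argument as the paper: identify the connected component of $m_1$ in $H\setminus S$ as the union of its component on the $L$-side with its component on the $R$-side, and observe that $m_2$ lies outside it. Your explicit ``easy case'' (a component of one side avoiding $M$ altogether) is a slightly more careful treatment than the paper's compressed argument that $u,v$ must land in different components on each side, but the underlying idea is identical.
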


\begin{figure}[ht]
    \centering
    \includegraphics[width=8cm]{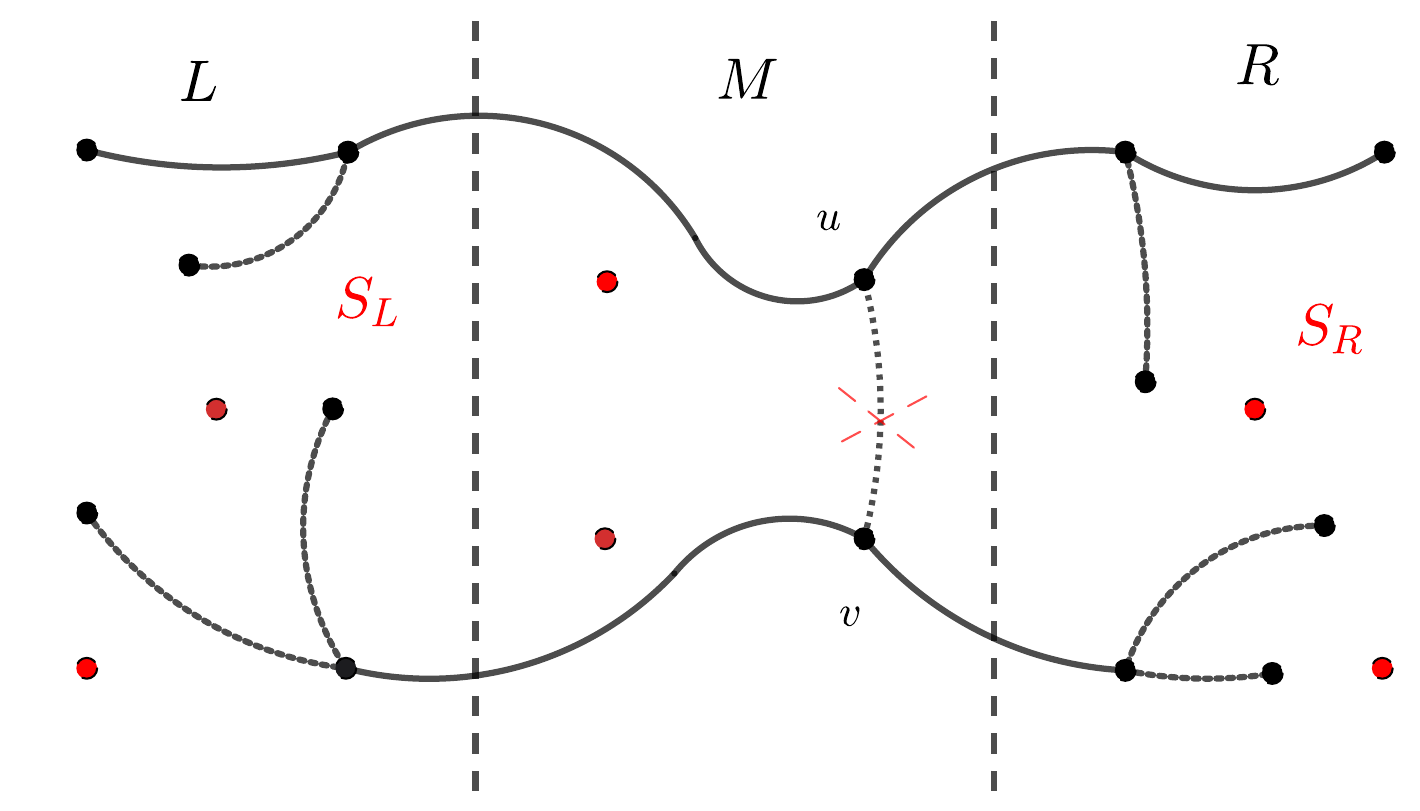}
    \label{fig:lemma3}
    \caption{Connected components in Lemma \ref{cut_lemma_2}.}
\end{figure}

\begin{proof}
     Let $u, v \in M \setminus S_L$. Observe  that $u$ and $v$ lie in different connected components of $H|_{(L \cup M) \setminus S_L}$. Indeed, if they were in the same component, then from any path in $H|_{V(H)\setminus S}$ between vertices in $(L \cup M) \setminus S_L$, we could remove the segment between the first and last intersection with $M$ (if necessary) and add a path from $u$ to $v$ in $H|_{(L \cup M) \setminus S_L}$, to get a path between them in $H|_{(L \cup M) \setminus S_L}$. This contradicts that $S_L$ is a cut of $H|_{L \cup M}$.   Similarly, $u$ and $v$ lie in different connected components of $H|_{(R \cup M) \setminus S_R}$ (Fig.~\ref{fig:lemma3}). 
     
     Let $L_u$ be the connected component of $u$ in $H|_{(L \cup M) \setminus S_L}$, and $R_u$ the connected component of $u$ in $H|_{(R \cup M) \setminus S_R}$. Since neither $L_u$ nor $R_u$ contains vertices from $M$ other than $u$, it follows that $L_u \cup R_u$ is the connected component of $u$ in $H|_{V(H)\setminus S}$, which does not contain $v$. Therefore, $H|_{V(H)\setminus S}$ is disconnected, which means that $S$ is indeed a cut of $H$.
\end{proof}

%    {\color{red}no figures :(}

\section{Properties of the minimal counterexample}
\label{sec:min_counter}

In this section, we assume that the class of induced graphs on cuts is one of the two classes considered, $\Psi\in \{\mathfrak{B}, \mathfrak{F}\}$, and the quality function $q(\cdot)=q_{\alpha,\beta}(\cdot)$ satisfies conditions $\eqref{eq:cond_ab}$. 

Let the graph $G$ be a minimal counterexample, i.e.,
\begin{equation}
\label{eq:min_counter}
    G \in \mathcal{G}(\Psi, \alpha, \beta).
\end{equation}

Recall that Definition~\ref{def:min_examples_family} only considers counterexamples with at least 4 vertices. All statements in this section will be proved under the assumption~\eqref{eq:min_counter}. For brevity, we will not repeat this condition in each formulation.

\begin{lemma}
\label{prop:separation_cut}
    If $(M, L, R)$ is a separation of $G$, $|M|\geq 3$ and $q(M) \geq 0$, then either $G|_{L \cup M}$ or $G|_{R \cup M}$ has a $\Psi$-cut.
\end{lemma}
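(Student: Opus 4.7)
The plan is to combine the modularity identity of Lemma~\ref{cost_modularity} with the minimality of $G$ in $\mathcal{G}(\Psi,\alpha,\beta)$. Applying that lemma to the separation $(M,L,R)$ of $G$ yields
\[
q(G|_{L\cup M}) + q(G|_{R\cup M}) = q(G) + q(G|_M).
\]
Since $q(G) > 0$ by~\eqref{eq:min_counter} and $q(G|_M) \geq 0$ by the hypothesis of the lemma, the right-hand side is strictly positive. Therefore at least one of the two summands on the left is strictly positive; by the symmetry between $L$ and $R$ I may assume $q(G|_{L\cup M}) > 0$.

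Next I verify that $G|_{L\cup M}$ meets the first two conditions of Definition~\ref{def:min_examples_family}. Since $R$ is non-empty, $|L\cup M| = |G| - |R| < |G|$, and since $L$ is non-empty and $|M|\geq 3$, we have $|L\cup M| \geq |M| + |L| \geq 4$. Combined with $q(G|_{L\cup M}) > 0$, this means that if $G|_{L\cup M}$ had no $\Psi$-cut, then $G|_{L\cup M}$ itself would lie in $\mathcal{G}(\Psi,\alpha,\beta)$ but with strictly fewer vertices than $G$, contradicting the minimality of $G$. Hence $G|_{L\cup M}$ must admit a $\Psi$-cut, proving the claim.

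I do not expect any substantive obstacle: the content of the argument is the positivity bookkeeping enabled by Lemma~\ref{cost_modularity}. The hypothesis $q(G|_M)\geq 0$ is used precisely to transfer the positivity of $q(G)$ onto at least one of the two induced subgraphs, while the hypothesis $|M|\geq 3$ is used only to guarantee $|L\cup M|\geq 4$ so that the minimality assumption from Definition~\ref{def:min_examples_family} can be invoked. No further structural information about $\Psi$ is needed at this stage, which is why the statement goes through uniformly for both $\Psi=\mathfrak{F}$ and $\Psi=\mathfrak{B}$.
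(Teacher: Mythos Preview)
Your proof is correct and follows essentially the same route as the paper: apply Lemma~\ref{cost_modularity} to obtain $q(G|_{L\cup M})+q(G|_{R\cup M})=q(G)+q(G|_M)>0$, then invoke minimality on whichever side has positive quality, using $|M|\geq 3$ together with $L,R\neq\varnothing$ to ensure the $\geq 4$ vertex requirement. One minor phrasing point: saying that $G|_{L\cup M}$ ``would lie in $\mathcal{G}(\Psi,\alpha,\beta)$'' is not quite right, since that family consists only of \emph{minimal} counterexamples; what you mean (and what your next clause makes clear) is that $G|_{L\cup M}$ would satisfy the first three conditions of Definition~\ref{def:min_examples_family} with fewer vertices than $G$, contradicting the fourth.
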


\begin{proof}
    Indeed, from Lemma~\ref{cost_modularity} and the properties of minimal counterexamples (see Definition~\ref{def:min_examples_family}), we have \[
    q(G|_{L\cup M})+q(G|_{R \cup M}) = q(G)+q(M)>0.
    \]
    At least one term on the left-hand side is positive. Since the number of vertices in $G|_{L\cup M}$ and $G|_{R \cup M}$ is less than in $G$ but not less than 4, at least one of them must contain a $\Psi$-cut.
\end{proof}

\begin{lemma} \label{small_cases}
    $|G|\geq 6$.
\end{lemma}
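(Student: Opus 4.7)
The plan is to rule out $|G|=4$ and $|G|=5$ separately. In each case I would combine the edge bound implied by $q(G)>0$ with a short structural description of $G$ to exhibit a $\Psi$-cut directly, contradicting the assumption that $G$ has none. The key observation underlying both cases is that any induced subgraph on at most two vertices automatically belongs to $\mathfrak{F}\cap\mathfrak{B}$, so any vertex cut of size at most $2$ is already a $\Psi$-cut.

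For $|G|=4$, I would argue as follows. From $q(G)>0$ and $4\alpha-\beta=6$ we get $e(G)<6$, so $G\neq K_4$. Since $K_4$ is the only $3$-connected graph on four vertices, $G$ must admit a vertex cut of size at most $2$, which is a $\Psi$-cut by the observation above.

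For $|G|=5$ the inequality $q(G)>0$ combined with $\alpha\leq 3$ yields $e(G)<5\alpha-\beta=\alpha+6\leq 9$, so $e(G)\leq 8$. If $G$ is not $3$-connected, a cut of size at most $2$ again gives a $\Psi$-cut. Otherwise $\delta(G)\geq 3$, forcing $2e(G)\geq 15$ and hence $e(G)=8$, so $G=K_5\setminus\{e_1,e_2\}$. The removed edges cannot share an endpoint (that vertex would have degree $2$), so they form a matching. Writing $e_1=ab$, $e_2=cd$, and letting $f$ be the fifth vertex, I would take $M=\{a,b,f\}$: its removal leaves the non-adjacent pair $c,d$, so $M$ is a cut, and $G|_M$ consists only of the edges $af$ and $bf$, a path lying in $\mathfrak{F}\cap\mathfrak{B}$.

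The only point needing any care is this final subcase, where one must verify that $K_5$ minus a matching is the unique $3$-connected graph on five vertices with eight edges and that it admits a short induced-path cut. Everything else reduces to degree counting.
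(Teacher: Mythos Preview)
Your proof is correct and follows essentially the same approach as the paper. The paper's version is terser: it writes $q(G)=\alpha(|G|-4)-(e(G)-6)$, checks that $q(K_4)=0$, $q(K_5)<0$, $q(K_5\setminus e)\leq 0$, and then simply asserts that every other graph on at most five vertices has a forest cut; you carry out that last assertion explicitly via connectivity and the $K_5$-minus-a-matching analysis, which is a welcome elaboration but not a different idea.
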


\begin{proof}
    Express the quality function and substitute $\beta = 4\alpha-6$ from Condition~\eqref{eq:cond_ab}:
    \[q(G) = \alpha|G| - e(G) - (4\alpha - 6) = \alpha(|G| - 4) - (e(G) - 6).\] 
    Furthermore, $\alpha \leq 3$. It is easy to verify that $q(K_4) = 0$, $q(K_5) < 0$, and $q(K_5\setminus e) \leq 0$, while all other graphs with at most 5 vertices have a forest cut (and thus also a bipartite cut).
\end{proof}

We are now ready to consider cuts of $G$ with at most four vertices.

\begin{lemma} \label{4_connectivity}
    $G$ is 4-connected and has no $K_4$-cut.
\end{lemma}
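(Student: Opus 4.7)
The plan is to rule out cuts of the forbidden types by a small case analysis on $|M|$ and the structure of $G|_M$. For a vertex cut $M$ with $|M| \leq 3$ whose induced subgraph is not a triangle, the induced subgraph $G|_M$ has at most $2$ edges, hence is a forest. Since both $\mathfrak{F}$ and $\mathfrak{B}$ contain every forest, such an $M$ would already be a $\Psi$-cut of $G$, contradicting $G \in \mathcal{G}(\Psi,\alpha,\beta)$. This disposes of all cuts of size $\leq 3$ except those inducing a triangle, so only two subcases remain: $G|_M \cong K_3$ (a triangular $3$-cut) and $G|_M \cong K_4$ (a $K_4$-cut).

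For these two subcases I compute the quality function, substituting $\beta = 4\alpha - 6$ from~\eqref{eq:cond_ab}:
\[
q(K_3) = 3\alpha - 3 - \beta = 3 - \alpha \geq 0, \qquad q(K_4) = 4\alpha - 6 - \beta = 0.
\]
Thus in either subcase, given any separation $(M,L,R)$ arising from such a cut, we have $|M| \geq 3$ and $q(M) \geq 0$. Lemma~\ref{prop:separation_cut} then applies and produces a $\Psi$-cut $S$ of (without loss of generality) $G|_{L \cup M}$.

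It remains to lift $S$ to a $\Psi$-cut of $G$ itself. For this I invoke Lemma~\ref{cut_lemma_1}: either $S$ is a cut of $G$, or $S \cap M$ is a cut of $G|_M$. The second alternative is impossible since $G|_M$ is $K_3$ or $K_4$, and a complete graph on at most $4$ vertices has no vertex cut (removing any proper subset of the vertices leaves a complete graph, which is connected). Hence $S$ must be a cut of $G$; as $G|_S$ is the same induced subgraph in both $G$ and $G|_{L \cup M}$, it lies in $\Psi$, and so $S$ is a $\Psi$-cut of $G$, contradicting $G \in \mathcal{G}(\Psi,\alpha,\beta)$.

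The main (and essentially only) obstacle is bookkeeping: exhausting the possibilities for $G|_M$ when $|M|\leq 3$ to confirm that every non-triangular small cut is automatically a $\Psi$-cut, and recording the easy observation that $K_3$ and $K_4$ admit no vertex cuts so that the dichotomy from Lemma~\ref{cut_lemma_1} always resolves in our favour. All the structural work has already been done in Lemmas~\ref{cost_modularity}, \ref{cut_lemma_1}, and~\ref{prop:separation_cut}.
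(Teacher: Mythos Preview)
Your proof is correct and follows essentially the same approach as the paper: first observe that any cut $M$ with $|M|\leq 3$ and $G|_M\not\cong K_3$ is already a forest (hence a $\Psi$-cut), then for $G|_M\in\{K_3,K_4\}$ verify $q(G|_M)\geq 0$, invoke Lemma~\ref{prop:separation_cut} to obtain a $\Psi$-cut $S$ on one side, and use Lemma~\ref{cut_lemma_1} together with the fact that complete graphs admit no vertex cut to conclude that $S$ is a $\Psi$-cut of $G$.
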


\begin{proof}
    Suppose not. Then there exists a separation $(M, L, R)$ of $G$ such that $|M| \leq 3$ or $G|_M = K_4$. Since $G$ has no $\Psi$-cuts, $G|_M$ is not a forest, hence $G|_M = K_3$ or $G|_M = K_4$. From the conditions on $\alpha$ and $\beta$, we get 
    \[q(K_4) = 4\alpha - \beta - 6 = 0,\] 
     \[q(K_3) = 3\alpha - \beta - 3 = 3\alpha - (4\alpha - 6) - 3 = 3 - \alpha \geq 0.\]
     In both cases $q(G|_M) \geq 0$, so by Lemma~\ref{prop:separation_cut}, either $G|_{L \cup M}$ or $G|_{R \cup M}$ has a $\Psi$-cut. Denote it by $S$. By Lemma~\ref{cut_lemma_1}, either $S \cap M$ is a cut of $G|_{M}$, or $S$ is a cut of $G$. The first case is impossible since $G|_{M}$ is complete, while the second contradicts the absence of $\Psi$-cuts in $G$.
\end{proof}

\begin{lemma} \label{no_t3}
    $G$ has no  $D$-cut.
\end{lemma}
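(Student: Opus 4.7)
Suppose for contradiction that $M = \{a,b,c,d\}$ is a $D$-cut of $G$ with $E(G|_M) = \{ab,ac,ad,bc,bd\}$ and non-edge $cd$; let $(M,L,R)$ be the associated separation. First I would compute $q(G|_M) = 4\alpha - 5 - \beta = 1 > 0$ using \eqref{eq:cond_ab}, and observe by a short case check that the only vertex cut of $D$ leaving at least two vertices behind is $\{a,b\}$ (which isolates $c$ from $d$).

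The overall strategy is to produce $\Psi$-cuts $S_L$ of $G|_{L\cup M}$ and $S_R$ of $G|_{R\cup M}$, both with $S_L\cap M = S_R\cap M = \{a,b\}$. Then Lemma~\ref{cut_lemma_2}, applied with $|M\setminus S_L|=2$, combines them into a cut $S = S_L\cup S_R$ of $G$; and since $G|_S$ is the gluing of $G|_{S_L}, G|_{S_R}\in \Psi$ along the common edge $ab$, Observation~\ref{obs:hereditary} forces $G|_S\in\Psi$, yielding a $\Psi$-cut of $G$ and contradicting $G\in \mathcal{G}(\Psi,\alpha,\beta)$.

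To execute this, I would invoke Lemma~\ref{prop:separation_cut} (valid since $|M|=4\geq 3$ and $q(G|_M)\geq 0$): one side, say $G|_{L\cup M}$, admits a $\Psi$-cut $S_L$. Lemma~\ref{cut_lemma_1} combined with the hereditary property of $\Psi$ rules out $S_L$ itself being a cut of $G$, and thus forces $S_L\cap M$ to be a cut of $D$; the observation above pins down $S_L\cap M = \{a,b\}$. The same reasoning shows that any $\Psi$-cut of $G|_{R\cup M}$ must intersect $M$ exactly in $\{a,b\}$, so the task reduces to ensuring that $G|_{R\cup M}$ admits some $\Psi$-cut. Two easy subcases cover most possibilities: if $q(G|_{R\cup M})>0$, minimality of $G$ delivers the required cut; if $G|_{R\cup\{c,d\}}$ is disconnected, then $\{a,b\}$ itself cuts $G|_{R\cup M}$, and $G|_{\{a,b\}}=K_2\in\Psi$ makes $S_R=\{a,b\}$ valid.

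The main obstacle is the remaining case where $q(G|_{R\cup M})\leq 0$ and $G|_{R\cup\{c,d\}}$ is connected. Here I would consider the auxiliary graph $G' = G + cd$: since $\mathfrak{F}$ and $\mathfrak{B}$ are closed under edge deletion, any $\Psi$-cut of $G'$ would give a $\Psi$-cut of $G$, so $G'$ inherits the absence of $\Psi$-cuts; moreover $|G'|=|G|$ and $M$ becomes a $K_4$-cut in $G'$. When $q(G') = q(G)-1 > 0$ we have $G'\in \mathcal{G}(\Psi,\alpha,\beta)$, so Lemma~\ref{4_connectivity} applied to $G'$ forbids the existence of this $K_4$-cut, contradiction. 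The delicate boundary case $0 < q(G) \leq 1$ is where I expect the main technical work: here the $G+cd$ trick drops $q$ below the threshold for $\mathcal{G}$, so one must combine it with the direct analysis, exploiting both the 4-connectivity of $G$ (Lemma~\ref{4_connectivity}) and the quality lower bound on $G|_{R\cup M}$ forced by its lack of $\Psi$-cuts to augment $\{a,b\}$ with a carefully chosen subset of $R$ --- avoiding in particular any vertex of $R$ adjacent to both $a$ and $b$, which would create a triangle destroying membership in $\Psi$.
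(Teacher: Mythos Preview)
Your overall strategy---locating the unique cut $\{a,b\}$ of $D$, producing $\Psi$-cuts $S_L,S_R$ on both sides meeting $M$ in $\{a,b\}$, and gluing them via Lemma~\ref{cut_lemma_2} and Observation~\ref{obs:hereditary}---matches the paper's Case~2 exactly. The divergence, and the gap, is in how you handle the situation $q(G|_{R\cup M})\le 0$.

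Your plan there is to pass to $G'=G+cd$ globally and invoke Lemma~\ref{4_connectivity} on $G'$ to forbid its $K_4$-cut. As you notice, this needs $q(G')=q(G)-1>0$, i.e.\ $q(G)>1$; in the boundary range $0<q(G)\le 1$ you offer only a vague promise to ``augment $\{a,b\}$ with a carefully chosen subset of $R$''. That is not a proof, and there is no evident reason such an augmentation should exist in a graph about which you know essentially nothing beyond $q(G|_{R\cup M})\le 0$.

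The paper avoids this boundary case entirely by localizing the edge-addition trick. Observe that whenever $q(G|_{R\cup M})\le 0$, modularity forces
\[
q(G|_{L\cup M}) \;=\; q(G)+q(G|_M)-q(G|_{R\cup M}) \;>\; 1.
\]
Now add the edge $cd$ and look only at $G'|_{L\cup M}$: it has $q>0$, so by minimality it carries a $\Psi$-cut $S$. Since $G'|_M=K_4$ is complete, $S\cap M$ cannot cut $G'|_M$, so Lemma~\ref{cut_lemma_1} makes $S$ a cut of $G'$ and hence of $G$; hereditarity keeps $G|_S\in\Psi$. This is the paper's Case~1, and it requires no hypothesis on $q(G)$ beyond $q(G)>0$. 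In short: replace your global application of Lemma~\ref{4_connectivity} to $G'$ by a local application of minimality to $G'|_{L\cup M}$, and the ``delicate boundary case'' disappears.
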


\begin{proof}
    Suppose the contrary. Let $(M, L, R)$ be a separation of $G$ where $G|_{M}$ is isomorphic to $D$. Let $u, v$ be non-adjacent vertices in $M$. Since $q(G|_{M}) = 4\alpha - 5 - \beta = 1$, by Lemma~\ref{cost_modularity} we have 
    \[q(G|_{L\cup M}) + q(G|_{R\cup M}) = q(G) + 1 > 1.\]
    Therefore, either one of $q(G|_{L\cup M})$ and $q(G|_{R\cup M})$ is strictly greater than 1, or both are strictly greater than 0. We consider these two cases.
    
    \textbf{Case 1:} Suppose one of $q(G|_{L\cup M})$ and $q(G|_{R\cup M})$ (without loss of generality, $q(G|_{L\cup M})$) is strictly greater than 1. Let $G'$ be the graph obtained from $G$ by adding edge $uv$. Then $|L\cup M| \ge 4$ and 
    \[q(G'|_{L \cup M}) = q(G|_{L \cup M}) - 1 > 0.\]
    By the minimality of $G$, $G'|_{L \cup M}$ has a $\Psi$-cut, denoted $S$. Since $G'|_{M}$ is complete, $S \cap M$ is not a cut of $G'|_{M}$, so by Lemma~\ref{cut_lemma_2}, $S$ is a cut of $G'$. But then $S$ is a $\Psi$-cut of $G$, since removing an edge cannot make $S$ stop being a cut, and the induced subgraph still belongs to $\Psi$ by Observation~\ref{obs:hereditary}. This contradicts the absence of $\Psi$-cuts in $G$.
        
    \textbf{Case 2:} Suppose both $q(G|_{L\cup M})$ and $q(G|_{R\cup M})$ are positive. Then by the minimality of $G$  and since $|L\cup M| \geq 4, |R\cup M| \geq 4$, both $G|_{L\cup M}$ and $G|_{R\cup M}$ have $\Psi$-cuts. Denote them by $S_L$ and $S_R$ respectively. Since $G$ has no $\Psi$-cuts, $S_L$ is not a cut of $G$, so by Lemma~\ref{cut_lemma_1}, $S_L \cap M$ is a cut of $M$. Hence $S_L \cap M = M \setminus \{u, v\}$ (since this is the only cut in $D$). Similarly, $S_R \cap M = M \setminus \{u, v\}$. Then by Lemma~\ref{cut_lemma_2}, $S = S_L \cup S_R$ is a cut of $G$. It remains to note that $S$ is a $\Psi$-cut of $G$, since gluing two forests along an edge produces a forest, and gluing two bipartite graphs along an edge produces a bipartite graph (see Observation~\ref{obs:hereditary}).
\end{proof}

\begin{lemma} \label{no_c4}
    $G$ has no $C_4$-cut.
\end{lemma}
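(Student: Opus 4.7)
My plan is to split on the class $\Psi$. For $\Psi = \mathfrak{B}$ the statement is immediate: $C_4$ is bipartite, so a $C_4$-cut is itself a bipartite cut, contradicting $G \in \mathcal{G}(\mathfrak{B}, \alpha, \beta)$. I therefore assume $\Psi = \mathfrak{F}$ and, for contradiction, fix a separation $(M, L, R)$ of $G$ with $G|_M \cong C_4$, writing $M = \{a, b, c, d\}$ with edges $ab, bc, cd, da$.

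Using $\beta = 4\alpha - 6$, one has $q(C_4) = 4\alpha - 4 - \beta = 2$, so by Lemma~\ref{cost_modularity} $q(G|_{L\cup M}) + q(G|_{R\cup M}) = q(G) + 2 > 2$. First I rule out $q(G|_{L\cup M}) > 2$ (and symmetrically on the $R$-side): adjoining both diagonals $ac$ and $bd$ to $G|_{L\cup M}$ yields a graph of positive quality, strictly fewer vertices than $G$, and $K_4$ on $M$ (which admits no vertex cut). By minimality this graph has a $\mathfrak{F}$-cut, which by Lemma~\ref{cut_lemma_1} must be a cut of $G + \{ac, bd\}$, hence of $G$; by hereditariness its $G$-induced subgraph is a forest, contradicting $G \in \mathcal{G}(\mathfrak{F}, \alpha, \beta)$. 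So $q(G|_{L\cup M}), q(G|_{R\cup M}) \in (0, 2]$ with sum exceeding $2$, and at least one, say $q(G|_{L\cup M})$, exceeds $1$.

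Next I manufacture a $\mathfrak{F}$-cut of $G$. Minimality on the $R$-side yields a $\mathfrak{F}$-cut $S_R$ of $G|_{R\cup M}$; by Lemma~\ref{cut_lemma_1} its $M$-trace is a cut of $C_4$, hence equals $\{a, c\}$ or $\{b, d\}$. Let $e$ be the corresponding diagonal. The graph $G|_{L\cup M} + e$ has positive quality, fewer vertices than $G$, and induces the diamond $D$ on $M$ whose unique vertex cut consists of the two endpoints of $e$; minimality and Lemma~\ref{cut_lemma_1} then supply a $\mathfrak{F}$-cut $S_L$ of $G|_{L\cup M} + e$ with $S_L \cap M = S_R \cap M$. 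Since $|M \setminus S_L| = 2$, Lemma~\ref{cut_lemma_2} makes $S := S_L \cup S_R$ a cut of $G$.

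The remaining task, which I expect to be the main technical point, is to verify $G|_S \in \mathfrak{F}$. Both $G|_{S_L}$ and $G|_{S_R}$ are forests, and $S_L \cap S_R$ consists exactly of the two endpoints of $e$. A hypothetical cycle in $G|_S$ cannot lie in a single half, so it must traverse both shared endpoints and split into two arcs between them; one arc then lies entirely in $G|_{S_L}$ and provides a path between the endpoints of $e$ inside $G|_{S_L}$. Together with $e$, this path would form a cycle in $G|_{S_L} + e = (G|_{L\cup M} + e)|_{S_L}$, contradicting that this induced subgraph is a forest by the choice of $S_L$. Hence $G|_S$ is a forest, $S$ is a $\mathfrak{F}$-cut of $G$, and we obtain the final contradiction. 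This is precisely the step where the analogous argument fails for $\Psi = \mathfrak{B}$ — an even-length path between the endpoints of $e$ inside $G|_{S_R}$ could combine with an odd-length path inside $G|_{S_L}$ to yield an odd cycle — which is why the bipartite case is dispatched separately at the outset.
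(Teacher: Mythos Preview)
Your proof is correct and follows essentially the same route as the paper's: dispose of $\Psi=\mathfrak{B}$ trivially, use $q(C_4)=2$ and modularity, eliminate the ``one side has quality $>2$'' case by completing $M$ to $K_4$, then on the remaining case add the single diagonal dictated by $S_R\cap M$, obtain $S_L$ with matching trace on $M$, glue via Lemma~\ref{cut_lemma_2}, and verify acyclicity by arguing that a putative cycle yields an $e_1$--$e_2$ path in $G|_{S_L}$ that closes up with the added diagonal. One small expository point: when you invoke Lemma~\ref{cut_lemma_1} to force $S_L\cap M$ to equal the endpoints of $e$, you should explicitly note (as the paper does) that the other alternative---$S_L$ already being a cut of $G+e$ and hence of $G$---gives an immediate contradiction; as written you silently skip this branch.
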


\begin{figure}[ht]
    \centering
    \includegraphics[width=8cm]{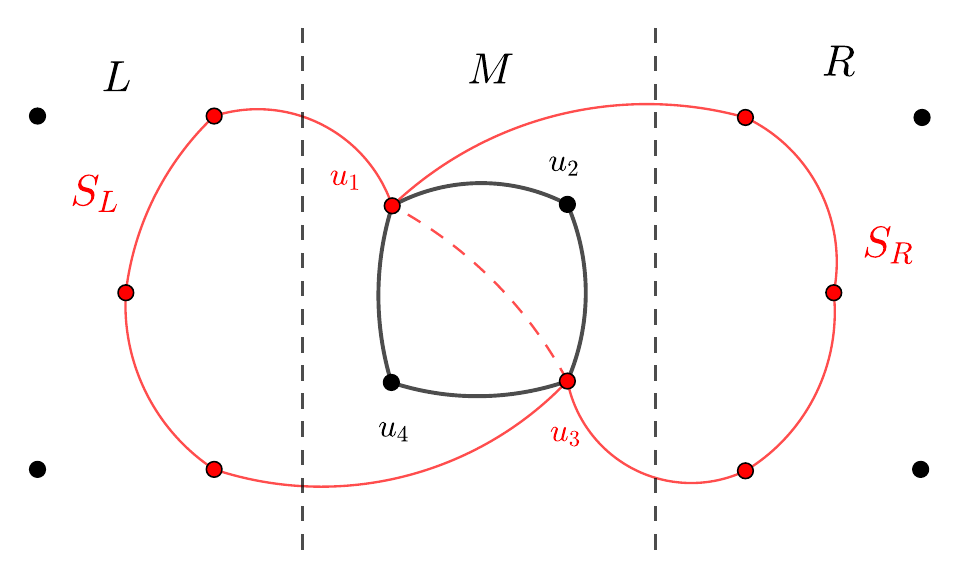}
    \label{fig:lemma8}
    \caption{Cycle on the vertices of the cut in Lemma \ref{no_c4}.}
\end{figure}

\begin{proof}
    The claim is trivial for the case of bipartite cuts since $C_4$ is bipartite. Consider the case of forest cuts ($\Psi=\mathfrak{F}$).
    
    Let $(M, L, R)$ be a separation of $G$ where $G|_{M}$ is isomorphic to $C_4$. Note that $q(G|_{M}) = 4\alpha - 4 - \beta = 2$. By Lemma~\ref{cost_modularity}, 
    \[q(G|_{L\cup M}) + q(G|_{R\cup M}) = q(G) + 2 > 2,\] 
    so either one of $q(G|_{L\cup M})$ and $q(G|_{R\cup M})$ is strictly greater than 2, or one is strictly greater than 1 and the other strictly greater than 0. The case where one exceeds 2 is handled similarly to Case 1 in Lemma~\ref{no_t3}.
    
    Consider the second case. Without loss of generality, assume that \[q(G|_{L\cup M}) > 1, \quad q(G|_{R\cup M}) > 0.\] By the minimality of $G$ and $|R\cup M| \geq 4$, $G|_{R\cup M}$ has a forest cut, denoted by $S_R$. From Lemma~\ref{cut_lemma_1} and the absence of forest cuts in $G$, $S_R \cap M$ must be a cut of $M$, i.e., $S_R \cap M$ consists of two non-adjacent vertices. 
    
    Let $u_1, u_3$ be these vertices, and $u_2, u_4$ the other two vertices in $M$. Consider the graph $G'$ obtained from $G$ by adding the edge $(u_1, u_3)$. Note that $S_R$ remains a cut of $G'|_{R \cup M}$, since $G$ and $G'$ differ only by an edge between vertices in $S_R$. Since 
    \[q(G'|_{L\cup M}) = q(G|_{L\cup M}) - 1 > 0,\] the subgraph $G'|_{L\cup M}$ has a forest cut, denoted $S_L$. If $S_L$ is a cut of $G'$, then it is also a cut of $G$, contradicting the absence of forest cuts in $G$. 
    
    Thus, by Lemma~\ref{cut_lemma_1}, $S_L \cap M$ is a cut of $G'|_M$, i.e., 
    \[S_L \cap M = \{u_1, u_3\} = S_R \cap M.\]
    By Lemma~\ref{cut_lemma_2}, $S = S_L \cup S_R$ is a cut of $G'$ and hence of $G$.
    
    To contradict the absence of forest cuts in $G$, it remains to check that $G|_S$ is a forest.
    
    Suppose the opposite. Then $G|_S$ contains a simple cycle (Fig.~\ref{fig:lemma8}). Since $G|_{S_L}$ and $G|_{S_R}$ are acyclic, the cycle must intersect both $S \cap L$ and $S \cap R$. However, $(\{u_1, u_3\}, S \cap L, S \cap R)$ is a separation of $S$, so the cycle contains a path from $u_1$ to $u_3$ which lies in $G|_{S_L}$. This path becomes a cycle when the edge $(u_1, u_3)$ is added, contradicting that $S_L$ is a forest cut of $G'|_{L \cup M}$.
\end{proof}

%{\color{red} Figures needed for Lemmas 7, 8}
\begin{lemma} \label{4_cut_structure}
    If $S$ is a cut of $G$ and $|S| \leq 4$, then $G|_{S}$ is isomorphic to $T_0$ or $T_1$.
\end{lemma}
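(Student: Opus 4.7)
The plan is to combine the preceding exclusion lemmas with a short enumeration of all $4$-vertex graphs. The first step is to pin down $|S|$: since Lemma~\ref{4_connectivity} asserts that $G$ is $4$-connected, every cut of $G$ has at least $4$ vertices, so the hypothesis $|S|\leq 4$ forces $|S|=4$. The second step is to use the defining property of a minimal counterexample $G\in\mathcal{G}(\Psi,\alpha,\beta)$, namely that $G$ has no $\Psi$-cut, to conclude $G|_S\notin\Psi$.

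Next I would enumerate the eleven isomorphism classes of graphs on $4$ vertices and isolate those which fail to lie in $\Psi$. For $\Psi=\mathfrak{F}$ the non-forests are exactly the graphs that contain a cycle, i.e.\ $C_4$, $T_0$, $T_1$, $D$, and $K_4$. For $\Psi=\mathfrak{B}$ the non-bipartite graphs are those containing an odd cycle, i.e.\ $T_0$, $T_1$, $D$, and $K_4$ (the only $4$-vertex non-forest that is still bipartite is $C_4$ itself). In either case the candidate list for $G|_S$ is contained in $\{C_4,T_0,T_1,D,K_4\}$.

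The final step is to eliminate all possibilities except $T_0$ and $T_1$ using the earlier lemmas. Lemma~\ref{4_connectivity} already rules out a $K_4$-cut; Lemma~\ref{no_t3} rules out a $D$-cut; and Lemma~\ref{no_c4} rules out a $C_4$-cut in the forest case (in the bipartite case $C_4$ was never even on the list, since $C_4\in\mathfrak{B}$). What remains in both cases is $G|_S\cong T_0$ or $G|_S\cong T_1$, as claimed.

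I expect no genuine obstacle here: the statement is essentially a bookkeeping corollary of $4$-connectivity and the three exclusion lemmas~\ref{4_connectivity}, \ref{no_t3}, \ref{no_c4}. The only point that deserves a brief sentence of justification is why the enumeration of $4$-vertex graphs not in $\Psi$ really reduces to the five (or four) listed graphs, which can be dispatched by noting that every other $4$-vertex graph is a subgraph of $P_4$ or $K_{1,3}$ and therefore a forest.
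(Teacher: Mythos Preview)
Your proposal is correct and follows essentially the same approach as the paper: pin down $|S|=4$ via $4$-connectivity, observe that $G|_S\notin\Psi$ forces $G|_S$ to be one of $C_4,T_0,T_1,D,K_4$, and then eliminate $K_4$, $D$, $C_4$ using Lemmas~\ref{4_connectivity}, \ref{no_t3}, \ref{no_c4}. The paper handles both choices of $\Psi$ in one stroke by noting that $\mathfrak{F}\subset\mathfrak{B}$ implies $G|_S$ is not a forest in either case, whereas you treat the two cases separately, but the substance is identical.
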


\begin{proof}
    By Lemma~\ref{4_connectivity}, $|S| = 4$. Since $G$ has no $\Psi$-cuts, $G|_S$ is not a forest, i.e., it is isomorphic to one of $C_4$, $T_0$, $T_1$, $D$, or $K_4$. But by Lemmas~\ref{4_connectivity}, \ref{no_t3}, and~\ref{no_c4}, it cannot be isomorphic to $K_4$, $D$, or $C_4$. Thus, it must be isomorphic to $T_0$ or $T_1$.
\end{proof}

Considering cuts formed by neighborhoods of vertices in $G$, from Lemmas~\ref{4_connectivity} and~\ref{4_cut_structure} we obtain:

\begin{corollary} \label{4_degree}
    $G$ has no vertices of degree less than 4, and the subgraph induced by the neighbors of any degree-4 vertex is isomorphic to $T_0$ or $T_1$.
\end{corollary}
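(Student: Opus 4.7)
The plan is to observe that for any vertex $v$ in a graph with sufficiently many vertices, the neighborhood $N(v)$ serves as a vertex cut, because removing $N(v)$ isolates $v$ from every other vertex. The precise side condition is that $G\setminus N(v)$ must contain at least two vertices (so that $v$ can actually be separated from something). Since Lemma \ref{small_cases} gives $|G|\geq 6$, this is automatic whenever $\deg(v)\leq 4$, and I would record this observation at the start of the argument.

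For the first half of the corollary, I would argue by contradiction: suppose some $v\in V(G)$ has $\deg(v)\leq 3$. Then $|N(v)|\leq 3$ and $|V(G)\setminus N(v)|\geq 6-3=3$, so $v$ together with at least two further vertices survive the removal of $N(v)$. Because every edge from $v$ lands in $N(v)$, the vertex $v$ becomes isolated in $G\setminus N(v)$, witnessing that $N(v)$ is a cut of size at most $3$. This directly contradicts the $4$-connectivity of $G$ established in Lemma \ref{4_connectivity}. Hence $\deg(v)\geq 4$ for every $v$.

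For the second half, assume $\deg(v)=4$. The same reasoning shows $N(v)$ is a cut of size exactly $4$ (the set $V(G)\setminus(N(v)\cup\{v\})$ is non-empty since $|G|\geq 6$). Lemma \ref{4_cut_structure} classifies every cut of size at most $4$: its induced subgraph must be isomorphic to $T_0$ or $T_1$. Applying this to $S=N(v)$ yields that $G|_{N(v)}\cong T_0$ or $G|_{N(v)}\cong T_1$, which is the claimed structure.

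There is no real obstacle here, as the corollary is essentially a packaging of the two preceding lemmas applied to the canonical cut $N(v)$; the only bookkeeping step is to invoke Lemma \ref{small_cases} to ensure $N(v)$ actually separates $v$ from at least one other vertex.
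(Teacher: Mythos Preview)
Your proposal is correct and follows exactly the paper's approach: the paper simply remarks that neighborhoods of vertices yield cuts and then invokes Lemmas~\ref{4_connectivity} and~\ref{4_cut_structure}. Your write-up is merely a more explicit unpacking of that one-line argument, including the (implicit in the paper) appeal to Lemma~\ref{small_cases} to guarantee that $V(G)\setminus(N(v)\cup\{v\})$ is non-empty.
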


The degree bound implies that a minimal counterexample cannot have 6 vertices.

\begin{lemma} \label{v_neq_6}
    $|G| \geq 7$.
\end{lemma}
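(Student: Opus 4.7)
My plan is to suppose, for contradiction, that $|G|=6$ and derive a clash between an upper bound on $e(G)$ coming from $q(G)>0$ and a lower bound on $e(G)$ coming from the minimum-degree information collected so far.

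First I would invoke Corollary~\ref{4_degree}: every vertex of $G$ has degree at least $4$. By the handshake lemma, this forces $2e(G)\geq 4|G| = 24$, hence $e(G)\geq 12$.

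Next I would obtain a matching upper bound from the hypothesis $q(G)>0$. Using $\beta=4\alpha-6$ from~\eqref{eq:cond_ab}, we have
\[
 0 < q(G) = \alpha\cdot 6 - e(G) - (4\alpha-6) = 2\alpha+6 - e(G),
\]
so $e(G) < 2\alpha+6$. Since $\alpha\leq 3$ by~\eqref{eq:cond_ab}, this yields $e(G) < 12$, i.e.\ $e(G)\leq 11$, contradicting the degree-sum lower bound. Combined with Lemma~\ref{small_cases}, this gives $|G|\geq 7$.

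There is no real obstacle here; the result is essentially a one-line consequence of Corollary~\ref{4_degree} together with the quality-function inequality, because at $|G|=6$ the quantity $q(G)$ can account for at most $2\alpha+6\leq 12$ edges, while forcing minimum degree four already spends exactly $12$ edges. The only thing to check is that no slack is lost in the chain $\beta=4\alpha-6$ and $\alpha\leq 3$, both of which are direct from~\eqref{eq:cond_ab}.
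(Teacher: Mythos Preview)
Your proposal is correct and mirrors the paper's own argument almost verbatim: both combine the minimum-degree bound $e(G)\geq 2|G|=12$ from Corollary~\ref{4_degree} with the upper bound $e(G)<2\alpha+6\leq 12$ obtained from $q(G)>0$ and $\beta=4\alpha-6$, then appeal to Lemma~\ref{small_cases}.
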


\begin{proof}
    By Lemma~\ref{small_cases}, $|G| \geq 6$, so it remains to show that $|G| \neq 6$. Suppose $|G| = 6$. We derive a contradiction by bounding $e(G)|$ in two ways. First, since all vertex degrees are at least 4, $e(G) \geq 2|G|=12$. Second, $q(G) > 0$ implies 
    \[e(G) < \alpha |G| - \beta = 6\alpha - (4\alpha - 6) = 2\alpha + 6 \leq 12. \qedhere\]
\end{proof}

\begin{lemma} \label{neigh_not_full}
The neighborhood of any vertex in $G$ cannot be a complete graph.
\end{lemma}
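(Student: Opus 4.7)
The plan is to argue by contradiction: suppose some vertex $v$ has $G|_{N(v)}$ complete, and set $d = \deg(v)$. By Corollary~\ref{4_degree} we have $d \geq 4$, and if $d = 4$ then the same corollary forces $G|_{N(v)}$ to be isomorphic to $T_0$ or $T_1$, ruling out $K_4$. So I may assume $d \geq 5$ and $G|_{N(v)} = K_d$.

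My idea is then to delete $v$ and apply the minimality of $G$ to the smaller graph $G' = G \setminus v$. Since $G$ is 4-connected (Lemma~\ref{4_connectivity}), $G'$ is still connected, and $|G'| = |G| - 1 \geq 6$ by Lemma~\ref{v_neq_6}. Moreover,
\[
q(G') = \alpha(|G| - 1) - (e(G) - d) - \beta = q(G) + d - \alpha \geq q(G) + 2 > 0,
\]
using $\alpha \leq 3$ and $d \geq 5$. By the minimality of $G$, the graph $G'$ must therefore admit a $\Psi$-cut $S$; since $G'$ is connected, $S \neq \emptyset$ and $G' \setminus S$ has at least two connected components.

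It remains to verify that this same $S$ is a $\Psi$-cut of $G$, which will contradict $G \in \mathcal{G}(\Psi, \alpha, \beta)$. The induced subgraph is unchanged, $G|_S = G'|_S \in \Psi$, so only the cut property needs checking. Reinserting $v$ into $G' \setminus S$ just adds edges from $v$ to $N(v) \setminus S$. If $N(v) \subseteq S$, then $v$ is isolated in $G \setminus S$ and the graph is still disconnected. Otherwise $N(v) \setminus S$ is a non-empty clique in $G' \setminus S$ (being a subset of the complete graph on $N(v)$), so it lies entirely inside one connected component of $G' \setminus S$; adding $v$ merely attaches it to that single component, leaving the remaining components of $G' \setminus S$ untouched. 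Either way $G \setminus S$ is disconnected, contradicting the absence of $\Psi$-cuts in $G$.

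The crux I expect is the reinsertion argument: the completeness of $N(v)$ is exactly what prevents $v$ from bridging the components produced by $S$, since a clique cannot be split across several components of $G' \setminus S$. Without this clique assumption one could have $N(v) \setminus S$ meeting two or more components, and then $v$ would reconnect them and spoil the cut; hence completeness of $N(v)$ is used in an essential way.
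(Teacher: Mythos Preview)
Your proof is correct and follows essentially the same route as the paper: delete the vertex, use minimality to find a $\Psi$-cut $S$ in $G' = G \setminus v$, then use completeness of $N(v)$ to show $S$ remains a cut in $G$. The only cosmetic differences are that the paper handles the quality bound directly via $\deg v \geq 4 > \alpha$ (so the $d=4$ case split is unnecessary), and it packages your reinsertion argument as an invocation of Lemma~\ref{cut_lemma_1} applied to the separation $(N(v),\, V(G)\setminus(N(v)\cup\{v\}),\, \{v\})$.
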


\begin{proof}
Suppose $G$ has a vertex $u$ whose neighborhood $N(u)$ is a complete graph. Then $(N(u), V(G)\setminus(N(u) \cup \{u\}), \{u\})$ is a separation of $G$. Let $G' = G\setminus\{u\}$. Then $$q(G') = q(G) - \alpha + \operatorname{deg} u > q(G) > 0,$$ since Corollary~\ref{4_degree} gives $\operatorname{deg} u \geq 4 > \alpha$. Note also that $|G'| \geq 6$.

Since $G$ is the minimal graph of positive quality without a $\Psi$-cut, $G'$ must have a $\Psi$-cut $S$. By Lemma~\ref{cut_lemma_1}, either $S$ is a cut of $G$, or $S\cap N(u)$ is a cut of $N(u)$. The first is impossible since $G$ has no $\Psi$-cuts, and the second is impossible since $N(u)$ is complete. 
\end{proof}

\section{Improving the estimate for forest cuts}
\label{sec:forest}

In this section, we complete the proof of Theorem~\ref{thm:forest}.

We assume that the graph $G$ satisfies the following conditions:
\begin{equation}
\label{eq:cond_ab_forest}
2<\alpha \leq \frac{5}{2}, \; 4\alpha-\beta=6, \quad
G \in \mathcal{G}(\mathfrak{F},\alpha,\beta).
\end{equation}

Here we consider minimal counterexamples under slightly broader assumptions than needed for Theorem~\ref{thm:forest}. Since the constraints on $\alpha$ in \eqref{eq:cond_ab_forest} are stronger than in Condition~\eqref{eq:cond_ab}, we can use all statements from Section~\ref{sec:min_counter}.

The key part of the proof is the following lemma. For $\Psi=\mathfrak{B}$ the statement holds without additional constraints on $\alpha$, but the proof differs substantially, so the case of bipartite cuts case will be treated separately in the next section.

\begin{lemma} \label{tree_common_tetrahedron}
    Let~\eqref{eq:cond_ab_forest} hold. Then no two degree-4 vertices of $G$ lie in the same $K_4$ subgraph of $G$.
\end{lemma}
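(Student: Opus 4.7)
The plan is to assume for contradiction that degree-$4$ vertices $u$ and $v$ lie in a common $K_4$ subgraph $\{u, v, a, b\}$, and to derive a forest cut of $G$. Let $u'$ and $v'$ denote the unique fourth neighbors of $u$ and $v$ (outside $\{v, a, b\}$ and $\{u, a, b\}$, respectively). First observe $u' \neq v'$: otherwise $\{a, b, u'\}$ would be a $3$-cut separating $\{u, v\}$ from the rest of $G$, contradicting Lemma~\ref{4_connectivity}. Set $M = \{a, b, u', v'\}$, $L = \{u, v\}$, $R = V(G) \setminus (L \cup M)$; since $N(u), N(v) \subseteq L \cup M$, this is a valid separation.

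By Corollary~\ref{4_degree} applied to $u$ and $v$, each of $u'$ and $v'$ has at most one neighbor in $\{a, b\}$. Combined with the edge $ab \in E(G|_M)$ and Lemma~\ref{4_cut_structure} (which forces $G|_M$ to be isomorphic to $T_0$ or $T_1$), a careful case analysis rules out any triangle in $G|_M$ containing both $a$ and $b$, and --- using Lemma~\ref{no_c4} together with the absence of forest cuts --- leaves $G|_M \cong T_1$ whose triangle is $\{b, u', v'\}$ or, symmetrically, $\{a, u', v'\}$. Assume WLOG the former, so $a$ is the pendant attached to $b$ and $u' \sim v'$. In particular, the cuts of $G|_M$ are exactly $\{b\}$, $\{b, u'\}$, $\{b, v'\}$.

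Direct counting gives $e(G|_{L \cup M}) = 11$, so $q(G|_{L \cup M}) = 2\alpha - 5 \leq 0$ by the assumption $\alpha \leq 5/2$. Since $q(G|_M) = 2$, Lemma~\ref{cost_modularity} yields $q(G|_{R \cup M}) > 2$, and by minimality $G|_{R \cup M}$ admits a forest cut $S_R$. By Lemma~\ref{cut_lemma_1}, either $S_R$ is a cut of $G$ (a forest cut, contradiction), or $S_R \cap M \in \{\{b\}, \{b, u'\}, \{b, v'\}\}$. In the cases $S_R \cap M = \{b, u'\}$ or $\{b, v'\}$, the plan is to pair $S_R$ with $S_L = \{v, b, u'\}$ or $\{u, b, v'\}$, respectively; each $S_L$ is a forest cut of $G|_{L \cup M}$ (the induced subgraph is a path on three vertices, and removal isolates one vertex). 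Lemma~\ref{cut_lemma_2} (with $|M \setminus S_L| = 2$) then yields that $S = S_L \cup S_R$ is a cut of $G$, and $G|_S$ is a forest by gluing $G|_{S_L}$ and $G|_{S_R}$ along the edge $bu'$ or $bv'$ (Observation~\ref{obs:hereditary}) --- contradiction.

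The main obstacle is the remaining case $S_R \cap M = \{b\}$, where Lemma~\ref{cut_lemma_2} does not apply directly since $|M \setminus S_R| = 3$. Here I would analyze the components of $G|_{R \cup M} \setminus S_R$: if $a$, $u'$, $v'$ all lie in one component, then any other component lies entirely in $R$ and connects to $V(G) \setminus R$ only through $b \in S_R$ (since $L$ has no edges to $R$), making $S_R$ itself a forest cut of $G$ --- contradiction. Otherwise $a$ and $\{u', v'\}$ (which lie together thanks to $u' \sim v'$) belong to different components; in this subcase the plan is to enlarge $S_R$ by including $u'$ or $v'$, carefully verifying that the new edges introduced to $G|_{S_R}$ do not create cycles and that the enlarged set remains a cut of $G|_{R \cup M}$ with the desired $M$-intersection, reducing to the earlier cases. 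This last verification, based on the explicit forest structure of $G|_{S_R}$ and the constrained adjacencies at $u', v'$, is the main technical step of the proof.
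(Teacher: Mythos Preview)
Your setup and the cases $S_R\cap M=\{b,u'\}$ and $\{b,v'\}$ are fine, but the remaining case $S_R\cap M=\{b\}$ has a genuine gap. When $a$ and $\{u',v'\}$ lie in different components of $G|_{R\cup M}\setminus S_R$, your plan is to enlarge $S_R$ by $u'$ (or $v'$) and invoke the earlier cases. The problem is that $u'$ may have several neighbours in $S_R\cap R$; adding $u'$ then creates a cycle in the induced subgraph, and nothing in the ``constrained adjacencies at $u',v'$'' prevents this, since $u'$ can have arbitrarily large degree with arbitrary neighbours in $R$. You flag this as ``the main technical step'' but do not carry it out, and I do not see a way to make the enlargement preserve the forest property in general.

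The paper sidesteps this case analysis entirely with a short trick you were one step away from. You correctly computed $q(G|_{R\cup M})>2$; this is precisely enough slack to add the two missing edges of $M$ and keep positive quality. Concretely, let $G'$ be $G$ with those two edges added, so that $G'|_M\cong K_4$. Then $q(G'|_{R\cup M})=q(G|_{R\cup M})-2>0$, so by minimality $G'|_{R\cup M}$ has a forest cut $S$. Since $G'|_M$ is complete it has no cut, so Lemma~\ref{cut_lemma_1} forces $S$ to be a cut of $G'$, hence of $G$; and $G|_S$, being a subgraph of the forest $G'|_S$, is itself a forest---contradiction. This single move replaces your entire case split on $S_R\cap M$.
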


\begin{figure}[ht]
    \centering
    \includegraphics[width=6cm]{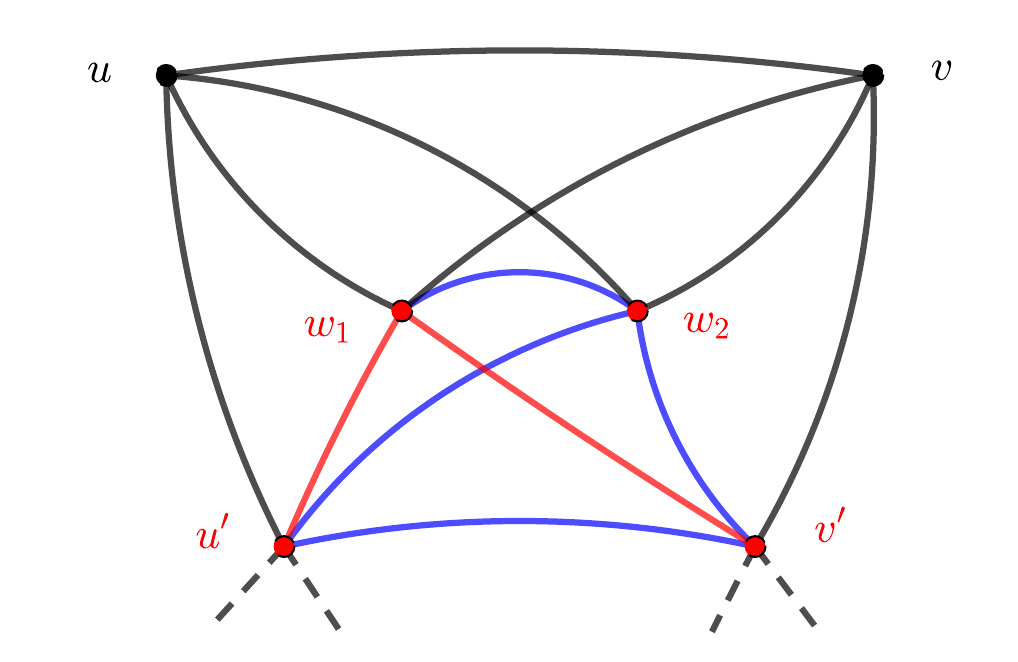}
    \label{fig:lemma12}
    \caption{Neighborhoods of $u$ and $v$ in Lemma \ref{tree_common_tetrahedron}.}
\end{figure}

\begin{proof}
    Suppose the contrary. Let $u, v$ be degree-4 vertices lying in $K_4 \subset G$. Let $w_1, w_2$ be the other two vertices of $K_4$, $u'$ the fourth neighbor of $u$, and $v'$ the fourth neighbor of $v$ (we allow $u'=v'$). Let 
    \[L = \{u, v\},\quad M = \{w_1, w_2, u', v'\}, \quad R = V(G) \setminus (L \cup M).\]
    Then $(M, L, R)$ is a separation of $G$. Note $R$ is non-empty since $|L \cup M| \leq 6$ while Lemma~\ref{v_neq_6} gives $|G| \geq 7$. Thus by Lemma~\ref{4_cut_structure}, $|M|=4$ (so $u'$ and $v'$ are distinct) and $G|_{M}$ is isomorphic to $T_0$ or $T_1$.
    
    The vertex $u'$ cannot be adjacent to both $w_1$ and $w_2$, since this would make $G|_{N(u)}$ have at least 5 edges, contradicting Corollary~\ref{4_degree}. Similarly, $v'$ cannot be adjacent to both $w_1$ and $w_2$, so edge $(w_1, w_2)$ cannot be part of a triangle in $G|_{M}$. Hence, $G|_{M}$ is isomorphic to $T_1$ instead of $T_0$, which means that $M$ has exactly two missing edges.
    
    Let $G'$ be obtained from $G$ by adding the two missing edges between vertices of $M$ (so $G'|_{M} = K_4$, Fig.~\ref{fig:lemma12}). Then $e(G'|_{M \cup R}) = e(G) - 5$, $|G'|_{M \cup R}| = |G| - 2$, and using $\alpha \leq \frac{5}{2}$ we get 
    \[q(G'|_{M \cup R}) = q(G) - 2\alpha + 5 \geq q(G) > 0.\]
    By minimality of $G$, the graph $G'|_{M \cup R}$ has a forest cut. From Lemma~\ref{cut_lemma_1} and $G'|_{M}$ being complete, $S$ is a cut of $G'$ and thus of $G$, contradicting the absence of forest cuts in $G$.
\end{proof}

\begin{corollary}
\label{cor:forest_degree4}
Under the same assumptions, if $U$ is the set of degree-4 vertices in $G$, then $G|_{U}$ consists of isolated vertices and edges.
\end{corollary}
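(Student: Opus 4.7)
The plan is to show that every degree-4 vertex has at most one neighbor that is also of degree 4; this immediately forces $G|_U$ to have maximum degree at most 1, which is exactly the claim.

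Fix a vertex $v \in U$. By Corollary~\ref{4_degree}, the induced subgraph $G|_{N(v)}$ on the four neighbors of $v$ is isomorphic either to $T_0$ or to $T_1$, and in both cases it contains a triangle on three of these neighbors. Call those triangle vertices $w_1, w_2, w_3$, and let $w_4$ be the remaining neighbor of $v$. Because $v$ is adjacent to each $w_i$ and $w_1w_2w_3$ is a triangle, the set $\{v,w_1,w_2,w_3\}$ induces a $K_4$ in $G$.

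Now I would invoke Lemma~\ref{tree_common_tetrahedron}: since $v$ has degree 4 and lies in this $K_4$, none of $w_1,w_2,w_3$ can have degree 4, i.e.\ $w_1,w_2,w_3 \notin U$. Consequently the only neighbor of $v$ that might belong to $U$ is $w_4$, so $v$ has at most one neighbor in $U$. As this holds for every $v \in U$, the induced subgraph $G|_U$ has maximum degree at most $1$ and therefore consists of isolated vertices together with a matching, as claimed.

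The argument is essentially a direct corollary, so I do not anticipate a real obstacle; the only subtle point is noticing that both possible isomorphism types $T_0$ and $T_1$ for $G|_{N(v)}$ contain a triangle, which is what lets us apply Lemma~\ref{tree_common_tetrahedron} to rule out the three triangle neighbors from $U$ regardless of which case occurs.
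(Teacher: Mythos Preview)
Your proof is correct and essentially identical to the paper's: both observe that $G|_{N(v)}$ contains a triangle, that $v$ together with this triangle forms a $K_4$, and that Lemma~\ref{tree_common_tetrahedron} then forces the three triangle vertices out of $U$, leaving at most one neighbor of $v$ in $U$.
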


\begin{proof}
By Corollary~\ref{4_degree}, for any $u \in U$ the graph $G|_{N(u)}$ is isomorphic to either $T_0$ or $T_1$, i.e., contains a triangle plus a vertex of degree $0$ or $1$. If $u,v \in U$ are adjacent, $v$ cannot be part of the triangle in $G|_{N(u)}$ as this would create $K_4$ containing two degree-4 vertices, contradicting Lemma~\ref{tree_common_tetrahedron}. Hence for each adjacent pair $u,v \in U$, the vertex $v$ must have degree $0$ or $1$ in $G|_{N(u)}$ and vice versa. Therefore, edges in $G|_U$ form a matching.
\end{proof}

\begin{remark}
The following lemma is more general than Corollary~\ref{cor:forest_degree4} and will also be used later in the proof concerning bipartite cuts. In the bipartite case, the structural properties of $G|_U$ analogous to Corollary~\ref{cor:forest_degree4} will be proved in a different way.
\end{remark}

\begin{lemma} \label{independent_to_forest}
    Let \eqref{eq:min_counter} hold. Suppose that $U$ is the set of all degree-4 vertices in $G$, and $G|_{U}$ is a disjoint union of isolated vertices and edges. If the subgraph $G|_{V(G) \setminus U}$ has an independent cut, then $G$ admits a forest cut.
\end{lemma}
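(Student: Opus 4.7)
The plan is to start from an independent cut $M_W$ of $G|_W$, giving a separation $(M_W, A', B')$ of $G|_W$, and extend it to a forest cut of $G$ by adding a carefully chosen subset $M_U \subseteq U$ to $M_W$.

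First, we classify each $u \in U$ by whether $N(u)$ intersects $A'$, $B'$, both, or neither. Vertices with neighbours in both $A'$ and $B'$ must be placed in $M$, since any other choice would create an edge across the separation. Vertices with neighbours in only one of $A', B'$ can tentatively be assigned to the corresponding side, and the remaining freedom will be used to reconcile matched pairs $(u,v) \in G|_U$: for each matched pair whose tentative assignments disagree, we put one of $u, v$ into $M$, chosen so as to minimise the resulting $M_W$-connectivity. This yields a candidate separation $(M, A, B)$ of $G$ with $M = M_W \cup M_U$, so $M$ is a vertex cut of $G$.

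The substantive part of the proof is to verify that $G|_M$ is a forest. The edges of $G|_M$ are of two types: matching edges within $M_U$ (each vertex of $M_U$ incident to at most one of them) and edges between $M_U$ and $M_W$. Using Corollary~\ref{4_degree}, every $u \in U$ satisfies $G|_{N(u)} \in \{T_0, T_1\}$, i.e.\ $N(u)$ induces a triangle together with one extra vertex. Since $M_W$ is independent in $G$ and no $A'$-vertex is adjacent to any $B'$-vertex, this imposes strong restrictions on the number of $M_W$-neighbours any $u \in M_U$ can have; in particular, vertices of ``both'' type have at most one $M_W$-neighbour in $G|_M$, because otherwise the triangle of $N(u)$ would have to contain two non-adjacent $M_W$-vertices or an $A'B'$-edge.

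The main obstacle is ruling out cycles in $G|_M$. Any such cycle must alternate between $M_W$ and $M_U$, possibly using matching edges of $G|_U$ as shortcuts inside $M_U$, and hence each of its $M_U$-vertices contributes two edges in $G|_M$. The neighbourhood restrictions from the previous paragraph constrain where those edges can go; by choosing the $M_U$ assignment optimally --- especially for conflicting matched pairs --- we expect that any remaining cycle would contradict either the independence of $M_W$, the separation property of $(M_W, A', B')$, or the minimality of $G$ (by letting us pass to a strictly smaller graph with a $\Psi$-cut in the style of Lemmas~\ref{no_t3} and~\ref{no_c4}). Carrying out this case analysis is the crux of the argument.
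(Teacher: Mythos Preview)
Your proposal is an outline, not a proof: you correctly identify the construction (extend the independent cut $M_W$ by a set $M_U\subseteq U$) and the obstacle (showing $G|_{M}$ is acyclic), but then explicitly defer that obstacle with ``we expect that any remaining cycle would contradict \ldots''. That is precisely the content of the lemma, and you have not supplied it. Moreover, the suggestion to fall back on the minimality of $G$ in the style of Lemmas~\ref{no_t3} and~\ref{no_c4} is a red herring: no appeal to minimality is needed here, and trying to engineer one would only complicate matters.

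The paper's argument avoids the global cycle analysis entirely by processing the components of $G|_U$ \emph{sequentially}. At each step one adds an isolated vertex $u$, or an edge $uv$, of $G|_U$ to the current partial separation $(M',L',R')$. The key observation is that the three $W$-neighbours of $u$ forming the triangle in $G|_{N(u)}$ (Corollary~\ref{4_degree}) can contain at most one vertex of $M'$ and otherwise lie in a single side, say $L'$; one then places $u$ in $L'$ whenever possible, and into $M'$ only when its fourth neighbour lies in $R'$. In the edge case one puts $u$ into $L'$ and $v$ into $M'$. Either way, the vertex newly added to $M'$ has at most one neighbour already in $M'$, so it is a leaf, and the forest property is preserved automatically. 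No cycle-chasing is required.

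Your all-at-once classification could be completed along similar lines, but you would need to argue (using the triangle structure) that for a matched pair $(u,v)$ neither endpoint can be of ``both'' type, so $M_U$ carries no matching edges, and that every vertex placed in $M_U$ has at most one $M_W$-neighbour. With those two facts $G|_M$ is a disjoint union of stars, hence a forest. The paper's sequential formulation packages the same observations more cleanly and sidesteps the case analysis you anticipated.
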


\begin{figure}[ht]
    \centering
    \includegraphics[width=8cm]{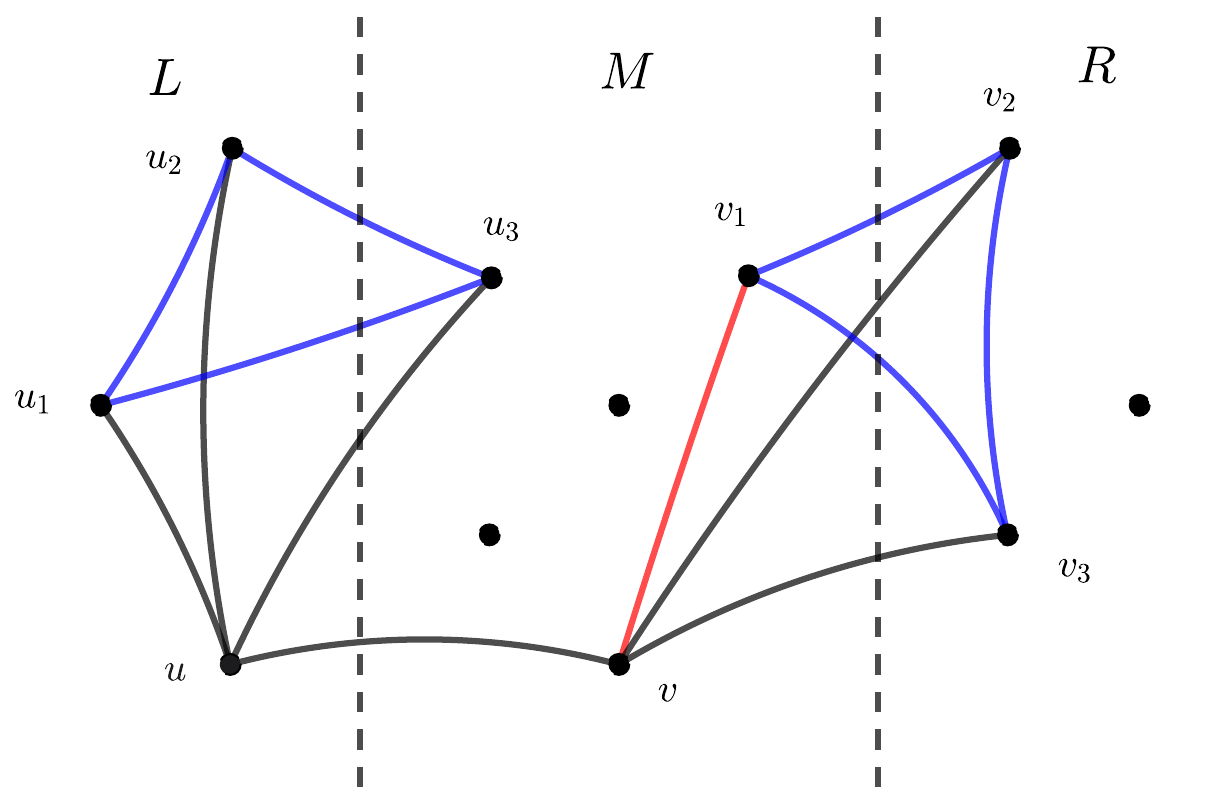}
    \caption{Adding two adjacent degree-4 vertices in Lemma~\ref{independent_to_forest}.}
    \label{fig:lemma13}
\end{figure}

% \begin{remark}
% The next lemma is more general, and we will need it later for the case of a bipartite cut. In that case the statement on the structure of $G|_U$ similar to Corollary \ref{cor:forest_degree4} will be proved in a different way. 
% \end{remark}

% \begin{lemma} \label{independent_to_forest}
%     Let \eqref{eq:min_counter} hold. If $U$ is the set of all degree-4 vertices in $G$, $G|_{U}$ is a union of isolated vertices and edges and $G|_{V(G) \setminus U}$ has an independent cut, then $G$ has a forest cut.
% \end{lemma}

% \begin{figure}[ht]
%     \centering
%     \includegraphics[width=8cm]{fig_lemma13.pdf}
%     \label{fig:lemma13}
%     \caption{Adding two adjacent degree-4 vertices in Lemma \ref{independent_to_forest}.}
% \end{figure}

\begin{proof}
    Let $M$ be an independent cut of $G|_{V(G) \setminus U}$ and $(M, L, R)$ be a corresponding separation. We process each component of $G|_U$ (isolated vertices or edges) one by one, adding the considered vertices to $L$, $R$ or $M$ while maintaining that $(M', L', R')$ is a separation, and $G|_{M'}$ is a forest (though not necessarily an independent set) in $G|_{M' \cup L' \cup R'}$. 
    
    Note that %since we add each component of $G|_U$ in one step, all neighbors of added vertices lie outside $U$, so vertices in $M$ adjacent to added vertices form an independent set. In other words, 
    at each step, any edge appearing in $G|_M'$ is incident to a vertex of degree 4, so when adding a connectivity component of $G_U$, the added vertex cannot have two adjacent neighbors in $M'$.
    
    We consider vertex and edge additions separately.
    
    \textbf{Case 1:} Adding an isolated (in $G|_U)$ vertex $u$. Let $w_1, w_2, w_3$ be its neighbors forming a triangle, and $v$ its fourth neighbor. At most one of $w_1, w_2, w_3$ lies in $M$, while the others lie in the same set $L$ or $R$ (as there are no edges between $L$ and $R$). Without loss of generality, assume that they lie in $L$. If $v \notin R$, add $u$ to $L$. Let $L'=L+\{u\}$. Since no neighbors of $u$ lie in $R$, the triple $(M, L', R)$ remains a separation and $G|_M$ remains a forest, since $M$ is unchanged. If $v \in R$, add $u$ to $M$, let $M'=M+\{u\}$. Then $(M', L, R)$ remains a separation (as $L$ and $R$ are unchanged) and $G|_M$ remains a forest since $u$ has at most one neighbor in $M$.
    
    \textbf{Case 2:} Adding two adjacent degree-4 vertices $u, v$. Let $u_1, u_2, u_3$ be neighbors of $u$ other than $v$, and $v_1, v_2, v_3$ be neighbors of $v$ other than $u$ (some $u_i$ and $v_i$ may coincide). As before, at most one of $u_1, u_2, u_3$ lies in $M$, with the others in $L$ (without loss of generality). Similarly, at most one of $v_1, v_2, v_3$ lies in $M$. Add $u$ to $L$ and $v$ to $M$. Let $L'=L+\{u\}$, $M'=M+\{v\}$. Vertex $u$ has no neighbors in $R$, so $(M', L', R)$ remains a separation, and $v$ has at most one neighbor in $M$, so $G|_{M'}$ remains a forest (Fig.~\ref{fig:lemma13}).
\end{proof}

We now prove Theorem \ref{thm:forest}.

\begin{proof}
    Suppose the contrary. Let $\alpha = \frac{19}{8} < \frac{5}{2}, \beta = \frac{28}{8}$ and $G \in \mathcal{G}(\mathfrak{F},\alpha,\beta)$ be a minimal counterexample. Let $U$ be the set of degree-4 vertices in $G$, and $G' = G|_{V(G) \setminus U}$. By Lemma~\ref{independent_to_forest}, the absence of forest cuts in $G$ implies that $G'$ has no independent cut, so by Theorem~\ref{thm:independent}:
    \[e(G') \geq 2|G'| - 3.\]
    
    Since all vertices in $U$ have degree 4 and $G|_U$ has at most $\frac{|U|}{2}$ edges:
    \[e(G') = e(G) - 4|U| + e(G|_M) \leq e(G) - \frac{7}{2}|U|.\]
    
    Combine these inequalities:
    \begin{equation}    
    \label{eq:thm2a}
        e(G) \geq 2|G'| - 3 + \frac{7}{2}|U| = 2|G| + \frac{3}{2}|U| - 3.
    \end{equation}
    On the other hand, since all vertices in $V(G) \setminus U$ have degree at least 5:
    \begin{equation}
    \label{eq:thm2b}
        e(G) \geq \frac{4|U| + 5(|G| - |U|)}{2} = \frac{5}{2}|G| - \frac{|U|}{2}.
    \end{equation}
    Taking a weighted sum, namely \eqref{eq:thm2a} with coefficient~$\frac{1}{4}$ and \eqref{eq:thm2b} with~$\frac{3}{4}$, gives:
    $$e(G) \geq \frac{19}{8}|G| - \frac{3}{4},$$
    which contradicts the condition $e(G) < \alpha |G| - \beta$.
\end{proof}

\begin{remark}
    \label{rmk:margin}
    One might think that the estimate could be improved, since the contradiction is reached with some margin. However, the value of $\beta$ in Theorem~\ref{thm:forest} is determined by the condition $4\alpha-\beta=6$, which was used many times before. A similar situation will arise in the proof of Theorem~\ref{thm:bipartite}.
\end{remark}

\section{Estimate for bipartite cuts}
\label{sec:bipartite}

In this section, we prove Theorem~\ref{thm:bipartite}. We assume that conditions \eqref{eq:cond_ab} hold, and that $G = (V,E)$ is a minimal counterexample for bipartite cuts, i.e.,
\[
    G \in \mathcal{G}(\mathfrak{B},\alpha,\beta).
\]

Additional constraints on the parameter $\alpha$ will not be needed for auxiliary statements, except in the last cases where parameter values are taken directly from Theorem~\ref{thm:bipartite}.

As in the previous section, we study properties of the subgraph $G|_U$ induced on the set $U$ of degree-4 vertices. This requires a lemma specific to the bipartite case:

\begin{lemma}\label{k23_contraction}
The graph $G$ does not contain a pair of non-adjacent vertices with at least three common neighbors.
\end{lemma}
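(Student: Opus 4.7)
The plan is to argue by vertex contraction. Assume for contradiction that there exist non-adjacent $u,v \in V(G)$ with $|N(u) \cap N(v)| \geq 3$. Let $G'$ be obtained from $G$ by contracting $u$ and $v$ into a single new vertex $w$ (so that $N_{G'}(w) = N(u) \cup N(v)$); since $u \not\sim v$ no loop is created, and after collapsing parallel edges the result is a simple graph with $|G'| = |G| - 1$ and $e(G') = e(G) - |N(u) \cap N(v)| \leq e(G) - 3$. Using \eqref{eq:cond_ab}:
\[
q(G') = \alpha(|G|-1) - e(G') - \beta \;\geq\; q(G) + (3 - \alpha) \;>\; 0,
\]
since $\alpha \leq 3$ and $q(G) > 0$. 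By Lemma~\ref{v_neq_6}, $|G'| \geq 6 \geq 4$, so the minimality of $G$ applies to $G'$.

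Hence $G'$ admits a bipartite cut $S$, and the next step is to lift $S$ to a bipartite cut $S'$ of $G$, contradicting $G \in \mathcal{G}(\mathfrak{B}, \alpha, \beta)$. Define $S' := S$ if $w \notin S$, and $S' := (S \setminus \{w\}) \cup \{u,v\}$ if $w \in S$. Bipartiteness of $G|_{S'}$ is easy in both cases: when $w \notin S$ one has $G|_{S'} = G'|_S$; when $w \in S$, any proper $2$-coloring of $G'|_S$ extends by assigning both $u$ and $v$ the color of $w$, which is legal since $u \not\sim v$ and every edge in $G|_{S'}$ incident to $u$ or $v$ corresponds to an edge of $G'|_S$ incident to $w$.

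For the cut property of $S'$ in $G$: when $w \in S$, one has $V(G) \setminus S' = V(G') \setminus S$ with identical induced edge sets, so $G \setminus S' = G' \setminus S$ is disconnected. When $w \notin S$, let $C$ be a component of $G' \setminus S$ not containing $w$ (it exists since $S$ is a cut) and pick $a \in C$ and some vertex $b$ that lifts the $w$-component of $G' \setminus S$ to $G \setminus S'$ (possibly $b \in \{u,v\}$). Any path in $G \setminus S'$ from $a$ to $b$ would project, by replacing each occurrence of $u$ or $v$ with $w$, to a walk in $G' \setminus S$ from $a$ into the $w$-component, contradicting the choice of $a$.

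The main obstacle is this final cut-lifting step: one must carefully verify that ``splitting'' the contracted vertex back into two non-adjacent vertices cannot merge components previously separated in $G' \setminus S$. The key point is that the edges incident to $u$ and $v$ in $G$ are, after identification, exactly the edges incident to $w$ in $G'$, so every walk in $G$ avoiding $S'$ projects to a walk in $G'$ avoiding $S$ between the same endpoints; the non-adjacency of $u$ and $v$ is what makes both the coloring and the projection consistent.
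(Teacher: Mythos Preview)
Your proof is correct and follows essentially the same approach as the paper: contract the two non-adjacent vertices into one, use $\alpha\le 3$ to show the quality stays positive, invoke minimality to obtain a bipartite cut of the contracted graph, and lift it back by replacing the contracted vertex with the original pair colored identically. The only difference is cosmetic: the paper dismisses the case $w\notin S$ in one clause (``it must contain $u$, otherwise it would be a cut in $G$''), whereas you spell out the projection argument explicitly.
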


\begin{proof}
Suppose there exist such vertices $u_1$ and $u_2$ with common neighbors $v_1, v_2, v_3$. Construct $G'$ from $G$ by: removing $u_1, u_2$ and their edges, then adding vertex $u$ connected to all neighbors of $u_1$ or $u_2$. Note that
\[q(G') \geq q(G) - \alpha + 3 \geq q(G) > 0.\]
Since $G'$ has fewer vertices than $G$ but more than four, $G'$ must have a bipartite cut $S'$. It must contain $u$, otherwise it would be a cut in $G$. Then $S = S'\cup\{u_1, u_2\} \setminus u$ is a bipartite cut in $G$ (placing $u_1, u_2$ in the partition of $u$), contradicting the properties of $G$.
\end{proof}

We need to prove that the subgraph induced on degree-4 vertices consists of a matching and isolated vertices. In the next two statements we show that no three degree-4 vertices form a triangle.

\begin{lemma} \label{4_no_tethraedron}
No two degree-4 vertices of G lie in the same
$K_4$ subgraph~of~$G$.
\end{lemma}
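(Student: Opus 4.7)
The plan is to follow the same initial geometric setup as in Lemma~\ref{tree_common_tetrahedron}, but instead of adding edges and invoking minimality---which forced the constraint $\alpha \leq 5/2$ in the forest case---I would extract a forbidden configuration of two non-adjacent vertices with three common neighbors and invoke Lemma~\ref{k23_contraction}. This bipartite-specific tool is precisely what lets the argument go through with no additional restriction on $\alpha$.

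Concretely, suppose for contradiction that degree-4 vertices $u, v$ both belong to a $K_4$ whose remaining vertices are $w_1, w_2$, and let $u'$ and $v'$ denote the fourth neighbors of $u$ and $v$ respectively. First I would show $u' \neq v'$: otherwise $\{w_1, w_2, u'\}$ would form a cut of size at most three, contradicting Lemma~\ref{4_connectivity}. Setting $L = \{u, v\}$, $M = \{w_1, w_2, u', v'\}$, and $R = V(G) \setminus (L \cup M)$, the bound $|G| \geq 7$ from Lemma~\ref{v_neq_6} ensures $R \neq \emptyset$, so $(M, L, R)$ is a separation, and Lemma~\ref{4_cut_structure} yields $G|_M \in \{T_0, T_1\}$.

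Next I would pin down the local structure of $G|_M$. By Corollary~\ref{4_degree} the subgraph $G|_{N(u)}$ is $T_0$ or $T_1$, and since it already contains the triangle on $\{v, w_1, w_2\}$, the vertex $u'$ can be adjacent to at most one of $w_1, w_2$; the analogous argument for $v$ shows that $v'$ is adjacent to at most one of $w_1, w_2$. The only way to embed a triangle in $G|_M$ under these constraints is to have $u'v' \in E(G)$ and both $u', v'$ adjacent to a common $w_i$; WLOG $i = 1$. This forces $G|_M$ to be $T_1$ with edge set $\{u'v',\ u'w_1,\ v'w_1,\ w_1 w_2\}$.

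The contradiction then comes from applying Lemma~\ref{k23_contraction} to the pair $(v, u')$: they are non-adjacent, since $N(v) = \{u, w_1, w_2, v'\}$ does not contain $u'$, yet they share three common neighbors---$u$ (adjacent to both), $w_1$ (adjacent to both by the structure above), and $v'$ (a neighbor of $v$ by construction, and of $u'$ via the triangle $\{u', v', w_1\}$). The main subtlety I anticipate is the case analysis fixing $G|_M$ as $T_1$ with the particular edge set above; in particular, one must rule out the possibility that the triangle in $G|_M$ uses the edge $w_1 w_2$, which is exactly what the degree-4 analysis of the neighborhoods $N(u)$ and $N(v)$ provides.
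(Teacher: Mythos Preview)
Your proposal is correct and follows essentially the same approach as the paper: set up the cut $M=\{w_1,w_2,u',v'\}$, use Corollary~\ref{4_degree} to rule out any triangle through the edge $w_1w_2$, conclude the triangle in $G|_M$ is $\{u',v',w_i\}$, and then exhibit a non-adjacent pair with three common neighbors to contradict Lemma~\ref{k23_contraction}. The only cosmetic difference is that you apply Lemma~\ref{k23_contraction} to the pair $(v,u')$ with common neighbors $u,w_1,v'$, whereas the paper uses the symmetric pair $(u,v')$ with common neighbors $v,w_1,u'$; your write-up is also slightly more explicit about why $R\neq\emptyset$ and why $G|_M$ must be $T_1$ rather than $T_0$.
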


\begin{proof}
Suppose such vertices $u, v$ exist. Let $x, y$ be their common neighbors, $u_1$ and $v_1$ be the fourth neighbors of $u$ and $v$ respectively. The vertices $u_1$ and $v_1$ are distinct, otherwise $G$ has a 3-vertex cut $\{x, y, u_1\}$ which contradicts Lemma~\ref{4_cut_structure}.  Neither of $u_1$ nor $v_1$ can be adjacent to both $x, y$ since $N(u)$ and $N(v)$ are isomorphic to either $T_0$ or $T_1$. Since $M = \{x, y, u_1, v_1\}$ is a cut, $G|_{M}$ contains a triangle. Without loss of generality, assume that $u_1, v_1, y$ form a triangle. Then $v_1$ and $u$ are non-adjacent and have common neighbors $v, u_1, y$ which contradicts Lemma \ref{k23_contraction}.
\end{proof}

\begin{lemma} \label{4_no_triangle}
$G$ does not contain a triangle of three degree-4 vertices.
\end{lemma}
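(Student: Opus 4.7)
The plan is to assume for contradiction that three degree-4 vertices $u, v, w$ form a triangle in $G$, and then to combine the rigid structure of the neighborhood of a degree-4 vertex (Corollary~\ref{4_degree}) with the absence of a $K_4$ through two degree-4 vertices (Lemma~\ref{4_no_tethraedron}) to force the five vertices $u, v, w, u_1, u_2$ --- where $N(u) = \{v, w, u_1, u_2\}$ --- into a configuration that then collides with itself when the same analysis is repeated for $v$.

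First I would inspect the induced subgraph $G|_{N(u)}$, which is isomorphic to $T_0$ or $T_1$ and therefore contains a triangle on three of $v, w, u_1, u_2$. The triangle cannot contain both $v$ and $w$, since the third vertex of that triangle would be adjacent to each of $u, v, w$ and hence $\{u, v, w, u_i\}$ would induce a $K_4$ containing two degree-4 vertices, contradicting Lemma~\ref{4_no_tethraedron}. By symmetry between $v$ and $w$, I may then assume the triangle in $G|_{N(u)}$ is $\{w, u_1, u_2\}$. Since $vw$ is already present in $G|_{N(u)}$, the vertex $v$ must be the pendant of a $T_1$ whose only neighbor in $G|_{N(u)}$ is $w$, so $v \not\sim u_1$ and $v \not\sim u_2$; simultaneously $w$ has the four neighbors $u, v, u_1, u_2$, forcing $N(w) = \{u, v, u_1, u_2\}$.

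Next I would apply the same reasoning to $N(v) = \{u, w, v_1, v_2\}$. Again the triangle in $G|_{N(v)}$ cannot contain both $u$ and $w$, so it is either $\{u, v_1, v_2\}$ or $\{w, v_1, v_2\}$. In the first case $v_1, v_2$ lie in $N(u) \setminus \{v, w\} = \{u_1, u_2\}$; in the second they lie in $N(w) \setminus \{u, v\} = \{u_1, u_2\}$. Either way $\{v_1, v_2\} = \{u_1, u_2\}$, and therefore $v \sim u_1$ and $v \sim u_2$, directly contradicting the end of the previous paragraph.

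The main obstacle will be the bookkeeping around which of $v, u_1, u_2$ plays which role inside $G|_{N(u)}$, because Corollary~\ref{4_degree} only guarantees $T_0$ or $T_1$ without distinguishing them; the edge $vw$ inside $N(u)$ is precisely what pins $v$ as the pendant vertex. Once that is nailed down, the parallel analysis of $N(v)$ and the forced identification $\{v_1, v_2\} = \{u_1, u_2\}$ deliver the contradiction, and no appeal to Lemma~\ref{k23_contraction} is needed.
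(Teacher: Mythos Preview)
Your argument is correct but takes an unnecessary detour. The paper's proof is one line: the triangle inside $G|_{N(u)}$ uses three of the four vertices $\{v,w,u_1,u_2\}$, hence contains at least one of $v,w$ (say $v$); together with $u$ that triangle is a copy of $K_4$ containing the two degree-$4$ vertices $u$ and $v$, contradicting Lemma~\ref{4_no_tethraedron}.

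You actually have this same contradiction in hand the moment you conclude that the triangle is $\{w,u_1,u_2\}$: then $\{u,w,u_1,u_2\}$ induces a $K_4$ containing the degree-$4$ vertices $u$ and $w$, and you can stop right there. Your subsequent analysis of $G|_{N(v)}$ and the forced identification $\{v_1,v_2\}=\{u_1,u_2\}$ is logically sound but entirely superfluous---you are deriving further consequences inside a configuration that Lemma~\ref{4_no_tethraedron} has already ruled out. The point you should take away is that the triangle in $G|_{N(u)}$ always extends to a $K_4$ through $u$, so the only question is whether that $K_4$ picks up a second degree-$4$ vertex; since three of the four candidates for the triangle are $v,w,u_1,u_2$ and two of those ($v$ and $w$) have degree~$4$, it always does.
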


\begin{proof}
Suppose such vertices $u, v, w$ exist. Then $G|_{N(u)}$ contains a triangle. Without loss of generality, $v$ lies in this triangle and together with $u$ this triangle forms a copy of $K_4$, which contradicts Lemma~\ref{4_no_tethraedron}.
\end{proof}

%\begin{figure}[ht]
%    \centering
%    \includegraphics[height=4cm]{fig_lemma15c1.pdf}  \includegraphics[height=5cm]{fig_lemma15c2.pdf}
%    \label{fig:lemma15}
%    \caption{Case 1 (left), Case 2 (right)  in Lemma \ref{4_no_triangle}.}
%\end{figure}

%\begin{proof}
%Suppose such vertices $u, v, w$ exist. Let $x_1, x_2$ be the neighbors of $u$ besides $v,w$. By Corollary~\ref{4_degree}, the subgraph on $v,w,x_1,x_2$ contains a triangle, which may contain one or both of $\{v,w\}$.

%\textbf{Case 1}: The triangle contains one vertex from $\{v,w\}$, say $v$ (Fig.~\ref{fig:lemma15}, left). Then $N(v) = \{u,w,x_1,x_2\}$, making $\{w,x_1,x_2\}$ a cut separating $u,v$ from the rest (which exists by Lemma~\ref{v_neq_6}). This contradicts that $G$ is 4-connected.

%\textbf{Case 2}: The triangle contains both $v,w$ and (without loss of generality) $x_1$. Then $\{u,v,w,x_1\}$ forms $K_4$. Let $u'=x_2,v',w'$ be the fourth neighbors of $u,v,w$ respectively. These are distinct, otherwise removing vertices $x_1,u',v',w'$, we get a cut, but two of them coincide. This is impossible since the graph is 4-connected. The induced subgraph on the cut $M=\{x_1,u',v',w'\}$ must be $T_0$ or $T_1$. If the triangle is $\{u',v',w'\}$, then $\{x_1,w,u',v'\}$ is a bipartite cut (as $x_1,u',v'$ can't form another triangle), separating $u,v$ from the rest (Fig.~\ref{fig:lemma15}, right). Otherwise, if (say) $w'$ is not a vertex of the triangle, then $u$ and $v'$ are non-adjacent vertices with three common neighbors ($u',v,x_1$), a contradiction with Lemma~\ref{k23_contraction}.
%\end{proof}

\begin{lemma}
The maximum degree in $G|_U$ is at most 1.
\end{lemma}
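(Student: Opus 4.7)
The plan is to argue by contradiction: suppose some $u \in U$ has two neighbors $v_1, v_2 \in U$, and show that this forces either a triangle of three degree-4 vertices or a $K_4$ containing two degree-4 vertices, both of which are already forbidden. I split on whether $v_1$ and $v_2$ are adjacent in $G$.

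If $v_1 \sim v_2$, then $\{u, v_1, v_2\}$ is a triangle of three vertices of $U$, and the contradiction is immediate from Lemma~\ref{4_no_triangle}. So I may assume $v_1 \not\sim v_2$ in what follows, and this is the case that requires actual work.

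In the non-adjacent case I use Corollary~\ref{4_degree}: since $u$ has degree $4$, the subgraph $G|_{N(u)}$ is isomorphic to $T_0$ or $T_1$, so in particular it contains a triangle on three of the four vertices of $N(u) = \{v_1, v_2, a, b\}$, where $a, b$ denote the two remaining neighbors of $u$. Because $v_1$ and $v_2$ are non-adjacent, this triangle cannot contain both of them; since it has three vertices among four, it must consist of $a$, $b$, and exactly one of $v_1, v_2$. Without loss of generality, $\{v_1, a, b\}$ spans a triangle. Combining this with the edges from $u$ to each of $v_1, a, b$ shows that $\{u, v_1, a, b\}$ induces a $K_4$, and this $K_4$ contains the two degree-4 vertices $u$ and $v_1$, contradicting Lemma~\ref{4_no_tethraedron}.

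The only subtlety is the forced location of the triangle inside $N(u)$: the non-adjacency of $v_1$ and $v_2$ is what lets us pin down which three vertices the triangle uses, and this is the step that converts the hypothesis ``$u$ has two $U$-neighbors'' into the $K_4$ configuration ruled out earlier. No edge counting, quality-function computation, or appeal to Lemma~\ref{k23_contraction} is needed here; the previously established structural lemmas for degree-4 vertices are already sharp enough to conclude.
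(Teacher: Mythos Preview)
Your proof is correct and in fact cleaner than the paper's. Both arguments begin the same way: take a degree-4 vertex (your $u$, the paper's $v$) with two degree-4 neighbors, rule out their adjacency via Lemma~\ref{4_no_triangle}, and observe that the triangle inside $N(u)$ guaranteed by Corollary~\ref{4_degree} must therefore contain exactly one of those two neighbors. At that point you immediately invoke Lemma~\ref{4_no_tethraedron}, since the triangle together with $u$ is a $K_4$ containing two degree-4 vertices; this is the same step the paper uses in its proof of Lemma~\ref{4_no_triangle}, and it finishes the job here as well. The paper, however, does not take this shortcut: having placed $w$ in the triangle $\{w,x_1,x_2\}$, it introduces the fourth neighbor $y$ of $w$, forms the cut $M=\{u,x_1,x_2,y\}$, and then argues via Lemma~\ref{k23_contraction} and Corollary~\ref{4_degree} that $G|_M$ is bipartite, yielding a forbidden bipartite cut. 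Your route avoids Lemma~\ref{k23_contraction} entirely and is strictly shorter; the paper's route is self-contained in the sense of exhibiting the bipartite cut explicitly, but it overlooks that Lemma~\ref{4_no_tethraedron} already closes the case.
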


\begin{proof}
Suppose degree-4 vertices $u, v, w$ exist with $v$ adjacent to both $u,w$. By Lemma~\ref{4_no_triangle}, $u$ and $w$ are not adjacent. Let $x_1,x_2$ be the other neighbors of $v$. Some three neighbors of $v$ must form a triangle  (say $w, x_1, x_2$). Let $y$ be the fourth neighbor of $w$. Then $M=\{u,x_1,x_2,y\}$ is a cut separating $v,w$ from the rest of the graph. $G|_M$ must be bipartite: $u$ has at most one neighbor in $\{x_1,x_2,y\}$ (otherwise $u,w$ would have three common neighbors, contradicting Lemma~\ref{k23_contraction}). Any triangle in $G|_M$ would require edges between $x_1,x_2,y$, but this would create two triangles in $N(w)$, contradicting Corollary~\ref{4_degree}. So we have found a bipartite cut, a contradiction.
\end{proof}

Thus, each degree-4 vertex has at most one degree-4 neighbor. We have shown that $G|_U$ is a matching plus isolated vertices. 

For $X \subset V = V(G)$, let $f(X)=e(X,V \setminus X)$ be the number of edges between $X$ and its complement. Let $A$ be a maximal independent set maximizing $m:=f(A)$, and $B=V\setminus A$. For convenience, let us list the variables that will be used in further reasoning.

\begin{notation}
Let:
\begin{itemize}
    \item $n=|G|$,
    \item $m = e(A, V \setminus A) =  \max e(X,V\setminus X)$ over independent sets $X \subset V$,
    \item $k = |A|$,
    \item $x$ is a number of degree-4 vertices in $A$,
    \item $y$ is a number of degree-5 vertices in $A$,
    \item $z$ is a number of degree-4 vertices in $A$ with a degree-4 neighbor.
\end{itemize}
\end{notation}

Clearly, $z \leq x$. We bound $e(G)$ in terms of $m,n,k$:

\begin{lemma}
\label{lm:edge_estimate_1}
$e(G)\geq m + 2n-2k-3.$
\end{lemma}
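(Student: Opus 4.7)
Plan: The claim is equivalent to $e(G|_B) \geq 2|B|-3$, because $A$ is an independent set and hence $e(G) = e(A,B) + e(G|_B) = m + e(G|_B)$. I will argue by contradiction: assuming $e(G|_B) \leq 2|B|-4$, I will build a bipartite cut of $G$ out of an independent cut of $G|_B$ supplied by Theorem \ref{thm:independent}, contradicting $G \in \mathcal{G}(\mathfrak{B},\alpha,\beta)$.

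Two preliminary observations are needed before invoking Theorem \ref{thm:independent} on $G|_B$. First, $|B| \geq 4$: the maximal independent set $A$ is nonempty, every $a \in A$ has $\deg_G(a) \geq 4$ by Corollary \ref{4_degree}, and all of its neighbors lie in $B$ because $A$ is independent. Second, $G|_B$ is connected -- otherwise $A$ is itself a vertex cut of $G$ (removing $A$ leaves the disconnected subgraph $G|_B$), and since $A$ is independent it is trivially bipartite, giving a bipartite cut of $G$, a contradiction.

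With these in hand, Theorem \ref{thm:independent} applied to the connected graph $G|_B$ on $|B|\geq 4$ vertices furnishes an independent cut $S \subset B$ of $G|_B$. The decisive move is to extend this to $M := S \cup A$. Both $S$ and $A$ are independent in $G$ and every edge of $G|_M$ runs from $S\subseteq B$ to $A$, so $G|_M$ is bipartite with bipartition $(S,A)$. Moreover $V(G) \setminus M = B \setminus S$ and $G|_{B \setminus S} = G|_B - S$ is disconnected by the very choice of $S$, so $M$ is a vertex cut of $G$. Hence $M$ is a bipartite cut of $G$, the desired contradiction, and so $e(G|_B) \geq 2|B|-3$.

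The only conceptual step is spotting the construction $M = S \cup A$: it converts an independent cut found inside the subgraph $G|_B$ into a bipartite cut of the whole graph $G$ by adjoining $A$ as the second side of a bipartition. Everything else is routine verification -- the edge-count identity, the lower bound $|B|\geq 4$, and the connectedness of $G|_B$ -- each of which follows directly from Corollary \ref{4_degree}, independence of $A$, and the hypothesis that $G$ has no bipartite cut.
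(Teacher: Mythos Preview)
Your proof is correct and follows essentially the same approach as the paper: show $e(G|_B)\geq 2|B|-3$ via Theorem~\ref{thm:independent}, noting that an independent cut $S$ of $G|_B$ would yield the bipartite cut $S\cup A$ of $G$. You are simply more explicit than the paper in verifying the side conditions (connectedness of $G|_B$ and $|B|\geq 4$), which the paper leaves implicit.
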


\begin{proof}
$G|_B$ has no independent cut (it would combine with $A$ to form a bipartite cut of $G$). By Theorem~\ref{thm:independent}:
\begin{equation}
    \label{eq:B_edges}
    e(G|_B) \geq 2(n-k)-3,
\end{equation}
and each vertex from $A$ has a neighbor from $B$, from which follows the statement.
\end{proof}

Next, we estimate $e(G)$ using the structure of degree-4 vertices:

\begin{lemma} \label{bip_edges_1}
$e(G)\geq 2n+3z - 3$.
\end{lemma}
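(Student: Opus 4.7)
The plan is to sharpen Lemma~\ref{lm:edge_estimate_1} by analyzing the $z$ matched pairs $(u_i, v_i)$ in $G|_U$, where $u_i \in A$, $v_i \in B$, both have degree $4$, and $u_i \sim v_i$. Since $G|_U$ is a matching and each $u_i$'s unique match in $G|_U$ lies in $A$, the vertices $v_1, \ldots, v_z$ are pairwise non-adjacent in $G$. My first step is to establish that $u_i$ is the \emph{unique} $A$-neighbor of $v_i$, via the maximality of $f(A) = m$: if some other $a \in A \setminus \{u_i\}$ were adjacent to $v_i$, then necessarily $a \in A \cap (V \setminus U)$, so $\deg a \geq 5$, and the swap $A' := A \cup \{v_i\} \setminus \{u_i, a\}$ followed by a suitable extension (whose existence is forced by maximality of $A$) would produce an independent set whose $f$-value exceeds $m$, a contradiction. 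It follows that $\deg_B(v_i) = 3$, all three $B$-neighbors of $v_i$ lie in $B' := B \setminus \{v_1, \ldots, v_z\}$, and therefore $e(G|_B) = e(G|_{B'}) + 3z$.

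The second step is to show that $G|_{B'}$ has no independent cut. Suppose, for contradiction, that $S \subseteq B'$ were such a cut. I would argue that $M := A \cup S \cup \{v_1, \ldots, v_z\}$ is a bipartite cut of $G$: the complement $G \setminus M = G|_{B' \setminus S}$ is disconnected by hypothesis, and for the bipartite structure of $G|_M$ I would use the partition $\bigl((A \setminus \{u_1, \ldots, u_z\}) \cup \{v_1, \ldots, v_z\}\bigr) \sqcup \bigl(\{u_1, \ldots, u_z\} \cup S\bigr)$. The first part is independent by the conclusion of step one together with $v_i \nsim v_j$; the second part is independent when no $p \in S$ is adjacent to any $u_i$, and such problematic $p \in S$ can be reassigned to the opposite side while preserving both cut and bipartite structure. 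Applying Theorem~\ref{thm:independent} to $G|_{B'}$ then yields $e(G|_{B'}) \geq 2(n-k-z) - 3$.

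Combining the estimates with the minimum-degree bound $m \geq 4k$ on $A$:
\[
e(G) = m + e(G|_B) \geq 4k + 3z + 2(n-k-z) - 3 = 2n + 2k + z - 3 \geq 2n + 3z - 3,
\]
where the last inequality uses $k \geq z$. The main obstacle is the rigorous verification of the bipartite structure of $M$ in the second step, in particular when $S$ contains a common neighbor $p$ of $u_i$ and $v_i$ (recall that $v_i$ can be adjacent to some $p_i^l \in N(u_i)$ according to the $T_0$/$T_1$ structure of $G|_{N(u_i)}$), since such $p$ creates a triangle $u_i - p - v_i$ in $G|_M$. This must be resolved either by a careful reassignment across the bipartition or by a refined choice of $S$ that avoids these common neighbors, using the structural lemmas already established.
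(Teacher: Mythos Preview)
Your approach diverges substantially from the paper's, and it has two real gaps that I do not see how to close.

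\textbf{Step 1 is not established.} The swap $A':=(A\setminus\{u_i,a\})\cup\{v_i\}$ \emph{decreases} $f$: since $A'$ is independent, $f(A')=f(A)-\deg u_i-\deg a+\deg v_i=f(A)-\deg a\le f(A)-5$. Your ``suitable extension whose existence is forced by maximality of $A$'' is not justified: any single vertex you could add back contributes only $\deg\ge 4$ to $f$, and there is no reason two such vertices must be available. In fact nothing in the setup appears to rule out the configuration where the triangle in $N(v_i)$ is $\{a,b,c\}$ with $a\in A$, $b,c\in B$; then $v_i$ has two $A$-neighbours and your edge count $e(G|_B)=e(G|_{B'})+3z$ fails. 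Without Step~1 your final chain only yields $e(G)\ge 2n+2k-3$ (using $m\ge 4k$ and $\ge 2z$ edges from each $v_i$ into $B'$), which does not imply $2n+3z-3$.

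\textbf{Step 2 is incomplete, as you note.} The problematic $p\in S$ adjacent to some $u_i$ cannot simply be reassigned: $p$ lies in the triangle of $N(u_i)$, so $p$ is adjacent to the other two triangle vertices, and $p$ may also be adjacent to vertices of $A\setminus\{u_1,\dots,u_z\}$ or to some $v_j$. Moving $p$ across the bipartition can create new violations, and iterating this has no obvious termination. Moreover, the triangle $u_i\text{--}p\text{--}v_i$ you flag is a genuine odd cycle inside $G|_M$, not removable by any $2$-colouring of $M$ as written.

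The paper's route avoids both problems. It deletes \emph{all} $2z$ matched vertices $u_1,v_1,\dots,u_z,v_z$ (exactly $7z$ edges, since the pairs are pairwise non-adjacent and each pair shares only the edge $u_iv_i$). If the remaining graph had fewer than $2(n-2z)-3$ edges, Theorem~\ref{thm:independent} would give it an independent cut; then the \emph{vertex-by-vertex extension} of Lemma~\ref{independent_to_forest} (Case~2) adds each pair back, placing one endpoint in $L$ and the other in $M$, producing a \emph{forest} cut of $G$ --- hence a bipartite one. This contradiction yields $e(G)\ge 7z+2(n-2z)-3=2n+3z-3$. The key point you are missing is that Lemma~\ref{independent_to_forest} already supplies the correct extension mechanism and makes the bipartite structure automatic; there is no need to keep $A$ intact or to hand-build a $2$-colouring.
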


\begin{proof}
First, let us note that the proof of Lemma~\ref{independent_to_forest} also works in the bipartite case since we only use the structure of a graph induced on degree-4 vertices. 

Now remove the $z$ degree-4 vertices in A and their degree-4 neighbors ($2z$ vertices, $7z$ edges). If the remaining graph has less than $2(n - 2z) - 3$ edges, then, following the argument of Lemma~\ref{independent_to_forest}, $G$ has a forest cut. Thus 
\[
    e(G) \geqslant 7z + 2(n - 2z) - 3 = 2n + 3z - 3. \qedhere
\]
\end{proof}

Finally, we estimate $m = e(A, B)$ in two ways.

\begin{lemma}
$m\geq 6k-2x-y$.
\end{lemma}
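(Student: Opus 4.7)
The plan is a straightforward degree-counting argument, exploiting the fact that $A$ is an independent set together with the minimum degree information from Corollary~\ref{4_degree}.

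First I would observe that, since $A$ is independent, no edge of $G$ has both endpoints in $A$. Consequently every edge incident to a vertex of $A$ must have its other endpoint in $B = V \setminus A$, and so
\[
m = e(A, B) = \sum_{v \in A} \deg(v).
\]
This identity reduces the lemma to a lower bound on the sum of degrees of the vertices of $A$.

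Next I would split $A$ into three groups by degree: the $x$ vertices of degree exactly $4$, the $y$ vertices of degree exactly $5$, and the remaining $k - x - y$ vertices. By Corollary~\ref{4_degree}, no vertex of $G$ has degree less than $4$, and by the definitions of $x$ and $y$, every vertex of $A$ not counted in $x$ or $y$ has degree at least $6$. Substituting these bounds yields
\[
m \;=\; \sum_{v \in A} \deg(v) \;\geq\; 4x + 5y + 6(k - x - y) \;=\; 6k - 2x - y,
\]
which is the desired inequality.

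There is no real obstacle here; the argument is just bookkeeping. The only subtlety worth flagging is that we implicitly use $x + y \leq k$, which holds by definition. Note that the maximality of $A$ (as well as the maximality of $m$ over independent sets) is not needed for this particular estimate, but will presumably enter in the complementary bound on $m$ used later.
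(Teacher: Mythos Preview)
Your proof is correct and matches the paper's own argument essentially line for line: both use that $A$ is independent to write $m=\sum_{v\in A}\deg(v)$ and then bound this sum by $4x+5y+6(k-x-y)=6k-2x-y$. Your additional remarks about $x+y\leq k$ and the irrelevance of maximality here are accurate side observations.
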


\begin{proof}
The independent set $A$ consists of $x$ vertices of degree 4, $y$ vertices of degree 5, and $k - x - y$ vertices of degree 6 or higher. Therefore:
\[
    m = e(A, B) \geq 4x + 5y + 6(k - x - y) = 6k - 2x - y. \qedhere 
\]
\end{proof}

Using \eqref{eq:B_edges}, we have:

\begin{corollary} \label{bip_edges_2}
$e(G)\geq 2n+4k-2x-y-3$.
\end{corollary}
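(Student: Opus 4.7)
The plan is to observe that this is a direct algebraic consequence of two facts already established: the lower bound $m \geq 6k - 2x - y$ from the preceding lemma, and the edge decomposition implicit in \eqref{eq:B_edges}.

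Since $A$ is independent, every edge of $G$ lies either inside $G|_B$ or crosses between $A$ and $B$, so
\[
e(G) \;=\; e(G|_B) + e(A,B) \;=\; e(G|_B) + m.
\]
Applying the inequality $e(G|_B) \geq 2(n-k)-3$ from \eqref{eq:B_edges} and the degree-based bound $m \geq 6k - 2x - y$ just proved, I would substitute to obtain
\[
e(G) \;\geq\; \bigl(2(n-k)-3\bigr) + \bigl(6k - 2x - y\bigr) \;=\; 2n + 4k - 2x - y - 3,
\]
which is exactly the claimed estimate. No additional structural input about $G|_U$, the minimality of $G$, or the parameters $\alpha,\beta$ is needed, so there is no real obstacle — the statement is purely a consolidation of the two preceding bounds into a single inequality that will be combined with Lemma~\ref{bip_edges_1} in the final optimization step.
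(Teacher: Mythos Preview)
Your proof is correct and matches the paper's approach exactly: the corollary is obtained by substituting the preceding bound $m \geq 6k - 2x - y$ into the decomposition $e(G) = e(G|_B) + m$ together with \eqref{eq:B_edges}. There is nothing to add.
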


\begin{observation}
\label{obs:red_1}
Every vertex in $B$ is connected to at least one vertex in $A$, otherwise the independent set $A$ could be expanded. Furthermore, if $u \in A$, $v\in B$ are connected by an edge and $\deg(u) < \deg(v)$, then $v$ has at least one other neighbor in $A$.
\end{observation}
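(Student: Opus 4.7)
The plan is to derive both claims by contradicting the two maximality properties built into the choice of $A$: that it is a maximal independent set, and that it maximizes $f(\cdot)$ among such sets. The whole argument is streamlined by the identity
\[
f(I) = e(I, V \setminus I) = \sum_{a \in I} \deg(a),
\]
valid for every independent set $I$ because no edge has both endpoints in $I$. A consequence I will use repeatedly: if $I$ is independent and $w \notin I$ has no neighbor in $I$, then $I \cup \{w\}$ is independent and $f(I \cup \{w\}) = f(I) + \deg(w) \geq f(I)$, so extending an independent set never decreases $f$.

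For the first claim, I would argue by contradiction: if some $v \in B$ had no neighbor in $A$, then $A \cup \{v\}$ would be an independent set strictly containing $A$, contradicting the maximality of $A$.

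For the second claim, suppose $u \in A$ and $v \in B$ are adjacent with $\deg(u) < \deg(v)$, and that, contrary to the statement, $u$ is the only neighbor of $v$ in $A$. I would perform the local exchange $A' = (A \setminus \{u\}) \cup \{v\}$. Because $u$ was the unique $A$-neighbor of $v$, the set $A'$ is still independent. If $A'$ happens to be maximal, then the displayed identity immediately gives $f(A') = f(A) - \deg(u) + \deg(v) > f(A)$, contradicting the choice of $A$ as maximizing $m$ among maximal independent sets. If $A'$ is not maximal, I would extend it greedily to a maximal independent set $A''$; by the monotonicity consequence noted above, each extension step only increases $f$, hence $f(A'') \geq f(A') > f(A)$, again a contradiction.

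I do not expect any serious obstacle: the whole proof is a one-step exchange argument. The only fine point worth stating explicitly is the handling of the case in which $A'$ is not already maximal, which is dispatched by the monotonicity of $f$ under extensions of an independent set.
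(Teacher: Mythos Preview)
Your proof is correct and uses essentially the same exchange argument as the paper: swap $u$ for $v$ to obtain an independent set with strictly larger $f$-value. Your extra case distinction on whether $A'$ is maximal is unnecessary, since the paper defines $m$ as the maximum of $e(X,V\setminus X)$ over \emph{all} independent sets $X$, not just maximal ones; hence $f(A')>f(A)=m$ is already the desired contradiction.
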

Otherwise, we could replace $u$ with $v$ in $A$, maintaining independence while increasing $e(A, B)$.

\begin{definition}
    Let us color the edge connecting $A$ and $B$ red if its endpoint in $B$ has at least two neighbors in $A$.
\end{definition}

\begin{observation} 
\label{obs:red_2}
A degree-4 vertex in $A$ has:
\begin{itemize}
    \item At least 3 red edges if it has a degree-4 neighbor.
    \item Exactly 4 red edges otherwise.
\end{itemize}
\end{observation}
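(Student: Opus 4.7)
The plan is to apply Observation~\ref{obs:red_1} edge-by-edge, using the two structural facts already in hand: by Corollary~\ref{4_degree} every vertex of $G$ has degree at least $4$, and by the preceding lemma the subgraph $G|_U$ on degree-$4$ vertices has maximum degree at most $1$. Fix a degree-$4$ vertex $u \in A$. Since $A$ is independent and $u \in A$, all four neighbors $v_1,v_2,v_3,v_4$ of $u$ lie in $B$, and every edge incident to $u$ is an edge between $A$ and $B$ that is a candidate to be red.

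For any $v_i$ with $\deg(v_i) > 4 = \deg(u)$, Observation~\ref{obs:red_1} directly yields that $v_i$ has a neighbor in $A$ other than $u$, so by the definition of a red edge the edge $uv_i$ is red. The rest is a case split on whether $u$ has a degree-$4$ neighbor. If $u$ has none, then since Corollary~\ref{4_degree} forbids degrees below $4$, every $v_i$ satisfies $\deg(v_i) \geq 5$, so all four edges incident to $u$ are red; $u$ has $4$ neighbors total, hence exactly $4$ red edges. If $u$ does have a degree-$4$ neighbor $w$, then by the matching structure of $G|_U$ this neighbor is unique, the remaining three neighbors all have degree at least $5$, and the same argument yields that those three edges are red, giving at least $3$ red edges at $u$.

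There is essentially no obstacle here — every ingredient is supplied by the previous observations and corollaries, and one only needs to keep straight which inequality among degrees triggers Observation~\ref{obs:red_1}. The only thing to be careful about is that the edge $uw$ between two degree-$4$ vertices \emph{cannot} be concluded red by this argument (their degrees are equal, so the hypothesis $\deg(u) < \deg(v)$ fails), which is exactly why the bound in the first bullet drops from $4$ to $3$.
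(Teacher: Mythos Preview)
Your argument is correct and is precisely the reasoning the paper leaves implicit: the observation is stated without proof, immediately after Observation~\ref{obs:red_1} and the definition of red edges, and your use of Observation~\ref{obs:red_1} together with the matching structure of $G|_U$ is exactly what the authors intend the reader to supply.
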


\begin{lemma}
\label{lm:deg5_red}
For $\alpha=\frac{80}{31}$, $\beta=\frac{134}{31}$, where $G$ is a minimal counterexample and the vertex sets $A$, $B$ are defined under these assumptions, every degree-5 vertex in $A$ has at least 2 red edges.
\end{lemma}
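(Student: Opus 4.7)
The plan is to suppose for contradiction that $v \in A$ has $\deg(v) = 5$ with at most one red edge, so $v$ has at least four non-red neighbors $w_1, w_2, w_3, w_4 \in B$, each having $v$ as its unique $A$-neighbor. By Observation~\ref{obs:red_1} applied to each non-red edge $v w_i$, $\deg(w_i) \leq \deg(v) = 5$, and combined with Corollary~\ref{4_degree} this gives $\deg(w_i) \in \{4, 5\}$.

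The main tool is a double swap: if some pair $w_i, w_j$ were non-adjacent, then $A' = (A \setminus \{v\}) \cup \{w_i, w_j\}$ would be independent (neither has any other $A$-neighbor), giving $f(A') \geq f(A) - 5 + 4 + 4 = f(A) + 3$; extending to any maximal independent set only increases $f$, contradicting the maximality of $f(A)$. Hence $\{w_1, w_2, w_3, w_4\}$ is a clique and together with $v$ it is a copy of $K_5$. If some $w_i$ had degree~$4$, then $N(w_i)$ would consist of the other four vertices of this $K_5$ and induce $K_4$; since $|V(G)| \geq 7$ by Lemma~\ref{v_neq_6}, this would be a $K_4$-cut of $G$, forbidden by Lemma~\ref{4_connectivity}. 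Hence $\deg(w_i) = 5$ for each $i$, with a fifth neighbor $x_i \notin \{v, w_1, \ldots, w_4\}$; since $w_i$ is non-red, $x_i \in B$.

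If $v$ has no red edges at all, the same argument applied to all five neighbors $w_1, \ldots, w_5$ yields either a non-adjacent pair (double-swap contradiction) or $\{w_1, \ldots, w_5\} = K_5$, so $G|_{N(v)}$ is complete and Lemma~\ref{neigh_not_full} is violated. The remaining case is that $v$ has exactly one red edge $vw_5$, with another $A$-neighbor $u \neq v$ of $w_5$; note that $u \not\sim w_i$ for $i \leq 4$, as otherwise $w_i$ would be red. Lemma~\ref{neigh_not_full} applied to $v$ forces $w_5$ to be non-adjacent to at least one $w_i$, WLOG~$w_1$; and if every $x_i$ equalled $w_5$ then again $\{w_1, \ldots, w_5\} = K_5 = G|_{N(v)}$, so we may assume some $x_1 \neq w_5$, whence $x_1 \notin N(v)$ and Lemma~\ref{k23_contraction} applied to the non-adjacent pair $(v, x_1)$ yields $|N(v) \cap N(x_1)| \leq 2$.

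I expect this last case to be the main obstacle. My plan is a compound swap: form $A' = (A \setminus N_A(w_5)) \cup \{w_1, w_5\}$, which is independent since $w_1 \not\sim w_5$ and both $w_1$ and $w_5$ have had all their $A$-neighbors removed (note $v \in N_A(w_5)$). The resulting value is $f(A') = f(A) + \deg(w_5) - \sum_{a \in N_A(w_5) \setminus \{v\}} \deg(a)$, and any extension of $A'$ to a maximal independent set contributes at least~$4$ to $f$ per new vertex. The technical step is to verify, in every sub-configuration permitted by the $K_5$ structure, the restriction on the $x_i$'s from Lemma~\ref{k23_contraction}, and the universal bound $\deg \geq 4$ from Corollary~\ref{4_degree}, that $A'$ admits enough extensions for $f(A'')$ to strictly exceed $f(A)$---a contradiction with the choice of $A$.
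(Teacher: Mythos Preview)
Your setup through the construction of the $K_5$ on $\{v,w_1,w_2,w_3,w_4\}$, the degree count $\deg(w_i)=5$, and the disposal of the zero-red-edge case is correct and essentially coincides with the paper's argument. The divergence is in the last case (exactly one red edge $vw_5$), and here your compound-swap plan has a genuine gap.

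Your formula $f(A')=f(A)+\deg(w_5)-\sum_{a\in N_A(w_5)\setminus\{v\}}\deg(a)$ is in fact correct, but it is the problem rather than the solution: you have no control over $N_A(w_5)\setminus\{v\}$. These are arbitrary vertices of $A$, unrelated to the local $K_5$ structure; their number is bounded only by $\deg(w_5)$ and their degrees are bounded only below by~$4$. The loss $\sum_{a}\deg(a)$ can therefore exceed $\deg(w_5)$ by an arbitrary amount. Your proposed remedy, extending $A'$ back to a maximal independent set, does not recover this: every removed $a\in N_A(w_5)$ is blocked from re-entry by $w_5$, and there is no mechanism tying the number of newly addable vertices to the lost degree sum. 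Neither Lemma~\ref{k23_contraction} (which constrains the $x_i$, not $N_A(w_5)$) nor the minimum-degree bound supplies the missing inequality. So the plan, as stated, does not close.

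The paper abandons the swap framework at exactly this point. Having the $K_5$ on $L=\{v,w_1,w_2,w_3,w_4\}$ with each vertex of degree~$5$, it lets $M=\{w_5,x_1,x_2,x_3,x_4\}$ be the set of external neighbors and does a case analysis on $|M|$. When $|M|\le 3$ one gets $V(G)=L\cup M$ and a direct $q(G)\le 0$ contradiction for $\alpha=\tfrac{80}{31}$; when $|M|=4$ or $|M|=5$ with $G|_M$ not complete, one exhibits an explicit $4$- or $5$-vertex bipartite cut separating part of $L$; when $|M|=5$ with $G|_M$ complete, one deletes $L$ and invokes minimality of $G$. This succeeds precisely because the $K_5$ has only five outgoing edges, so $M$ is small and the case analysis is finite --- whereas your swap needs global information about $A$ that the local picture cannot supply.
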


\begin{proof}
First, note that every $u \in A$ has at least one red edge. By Lemma~\ref{neigh_not_full}, $u$ has non-adjacent neighbors $v, w \in B$. At least one of $uv$ or $uw$ must be red. Otherwise we could remove $u$ from $A$ and add both $v$ and $w$, maintaining independence while increasing $e(A,B)$.

Now suppose some $u \in A$ has exactly one red edge $uv$. Let \[N(u) = \{v, u_1, u_2, u_3, u_4\}.\] The subgraph induced on $\{u_1, u_2, u_3, u_4\}$ must be complete. Otherwise we could find another red edge from $u$ by similar reasoning. Moreover, each $u_i$ has degree exactly 5.

Consider the subgraph $G|_L$ where $L = \{u, u_1, u_2, u_3, u_4\}$ (a complete graph). Let $v, v_1, v_2, v_3, v_4$ be the neighbors of these vertices outside $L$ (possibly coinciding). We analyze cases based on the number of distinct vertices in $M = \{v, v_1, v_2, v_3, v_4\}$:

\textbf{Case 1:} $|M| \leq 2$. Then $V(G) = M \cup L$ (otherwise $M$ would be a cut, which is impossible since all cuts have cardinality $\geq 4$). But then:
$$q(G) \leq \frac{80}{31}\cdot 7 - 15 - \frac{134}{31} < 0.$$
contradicting the definition of $G$.

\textbf{Case 2:} $|M| = 3$. Similarly $V(G) = M \cup L$. Each $v \in M$ has $\leq 3$ neighbors in $L$ and degree $\geq 4$, so $G|_M$ has at least 2 edges. Thus $e(G) \geq 17$ and:
$$q(G) \leq \frac{80}{31}\cdot 8 - 17 - \frac{134}{31} < 0,$$
a contradiction with the definition of $G$.

\textbf{Case 3:} $|M| = 4$. Without loss of generality, $v = v_1$. Then $M' = \{v, v_2, u_3, u_4\}$ is a bipartite cut (only two possible edges: $vv_2$ and $u_3u_4$) separating $u,u_1,u_2$, a contradiction with the definition of $G$.

\textbf{Case 4:} $|M| = 5$ with $G|_M$ complete. For $G' = G|_{V(G)\setminus L}$:
$$q(G') = q(G) - 5\alpha + 15 > q(G),$$
so $G'$ must have a bipartite cut $M'$, which would also be a cut in $G$. Indeed, suppose there are vertices $x$ and $y$ which lie in different connected components of $G'|_{V(G')\setminus M'}$, but between which there is a path $x=x_0, x_1, \ldots, x_n = y$ in the graph $G|_{V(G)\setminus M'}$. Along the way, we must encounter vertices from $L$. Consider $x_i$ and $x_j$, the first and last vertex from $L$. The vertices $x_{i-1}$ and $x_{j+1}$ are in the set $M$. Therefore, they are connected by an edge, and hence there is a path between $x$ and $y$ that avoids vertices from $L$, which contradicts the choice of $M'$.

\textbf{Case 5:} $|M| = 5$, and $G|_M$  is not complete. Without loss of generality, edge $vv_1$ is missing. Then $\{v, v_1, v_2, u_3, u_4\}$ is a bipartite cut (possible edges: $vv_2, v_1v_2, u_3u_4$), again contradicting the definition of $G$.

Thus, all cases lead to contradictions, proving the lemma.
\end{proof}

Using Observation~\ref{obs:red_2}, we obtain:

\begin{corollary}
Let $r$ be the number of red edges. Then:
$$r \geq 3z + 4(x-z) + 2y + k - x - y.$$
\end{corollary}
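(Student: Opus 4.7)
The plan is to count $r$ by summing a lower bound on the number of red edges incident to each vertex of $A$. Since $A$ is independent, every edge between $A$ and $B$ has a unique endpoint in $A$, so writing $r(u)$ for the number of red edges incident to $u \in A$ we have
\[ r = \sum_{u\in A} r(u), \]
with no edge double-counted. It therefore suffices to assemble per-vertex lower bounds on $r(u)$ from the earlier results.

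First I would partition $A$ into four classes by vertex type: the $z$ degree-$4$ vertices having a degree-$4$ neighbor, the $x-z$ remaining degree-$4$ vertices, the $y$ degree-$5$ vertices, and the $k-x-y$ vertices of degree at least $6$. For the first two classes, Observation~\ref{obs:red_2} directly supplies the bounds $r(u)\geq 3$ and $r(u)\geq 4$ respectively; for the degree-$5$ class, Lemma~\ref{lm:deg5_red} supplies $r(u)\geq 2$.

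For the remaining class, the bound $r(u)\geq 1$ is obtained from the swap argument in the opening paragraph of the proof of Lemma~\ref{lm:deg5_red}: by Lemma~\ref{neigh_not_full} the vertex $u$ has two non-adjacent neighbors $v,w\in B$, and if neither $uv$ nor $uw$ were red, then replacing $u$ by $\{v,w\}$ in $A$ would preserve independence while strictly increasing $e(A,V\setminus A)$, contradicting the maximality of $m$. This argument uses only $u\in A$ and maximality of $m$; it is insensitive to $\deg u$ and therefore applies uniformly to every vertex in $A$, in particular to those of degree at least~$6$.

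Summing the four per-class contributions $3z$, $4(x-z)$, $2y$, and $k-x-y$ yields the claimed inequality. There is no substantial obstacle here: the content sits entirely in the previous observations and lemmas, and the only point requiring care is the avoidance of double counting, which is automatic because $A$ is independent.
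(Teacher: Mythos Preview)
Your proof is correct and follows exactly the approach the paper intends: the paper's one-line justification ``Using Observation~\ref{obs:red_2}'' implicitly also draws on Lemma~\ref{lm:deg5_red} and the opening paragraph of its proof, and you have made the per-class summation over $A$ explicit. The observation that no double counting occurs because $A$ is independent is the right bookkeeping point.
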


\begin{lemma}
$2m\geq 2n-k+3x+y-z$.
\end{lemma}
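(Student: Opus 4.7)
The approach will be to bound $m$ from below by partitioning $B$ according to how many neighbors each vertex has in $A$, and then combine the result with the red-edge lower bound supplied by the preceding corollary.

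Concretely, Observation~\ref{obs:red_1} guarantees that every vertex of $B$ has at least one neighbor in $A$. I will split $B = B_1 \sqcup B_{\geq 2}$ according to whether the number of neighbors in $A$ equals $1$ or is at least $2$. By the definition of a red edge, the non-red $A$-$B$ edges are exactly those with endpoint in $B_1$, which gives $|B_1| = m - r$. On the other hand, every vertex of $B_{\geq 2}$ is the $B$-endpoint of at least two red edges, so $|B_{\geq 2}| \leq r/2$.

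Since $|B| = n - k$, summing the two parts gives
\[n - k \;=\; |B_1| + |B_{\geq 2}| \;\leq\; (m - r) + r/2 \;=\; m - r/2,\]
which rearranges to $2m \geq 2(n-k) + r$. Plugging in the estimate
\[r \geq 3z + 4(x - z) + 2y + (k - x - y) = 3x + y + k - z\]
from the preceding corollary immediately yields $2m \geq 2n - k + 3x + y - z$, which is the claim.

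This is essentially a short double-counting once the partition of $B$ is set up, so there isn't really a hard step; the only place that demands care is the identification of the non-red edges with the vertices of $B_1$, so that $|B_1| = m - r$ holds as an equality rather than merely an upper bound. An error here would propagate directly into the final inequality, so I would verify explicitly that each vertex of $B_1$ is incident to exactly one $A$-$B$ edge and that no red edge contributes to $|B_1|$.
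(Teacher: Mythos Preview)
Your proof is correct and follows essentially the same approach as the paper: both arguments show $m \geq (n-k) + r/2$ by using that each vertex of $B$ contributes either one non-red edge or at least two red edges to $m$, and then insert the preceding corollary's lower bound for $r$. The only difference is cosmetic---the paper writes the bound as $m \geq \sum_{v\in B}(1+r_v/2)$ rather than via your explicit partition $B = B_1 \sqcup B_{\geq 2}$.
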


\begin{proof}
We estimate $m$ using the number of red edges. Let $r_v$ be the number of red edges from vertex $v$. Each $v \in B$ has either:
\begin{itemize}
    \item exactly one edge to $A$, or
    \item at least $r_v \geq 2$ red edges.
\end{itemize}
Thus:
\[
\begin{split}
m &\geq \sum_{v\in B}\left(1 + \frac{r_v}{2}\right) \geq n - k + \frac{r}{2}  \\
&= n - k + \frac{1}{2}(3z + 4(x-z) + 2y + k - x - y).
\end{split}
\]

Multiplying by 2 gives:
\[
2m \geq 2n - k + 3x + y - z. \qedhere
\]
\end{proof}

Taking into account \eqref{eq:B_edges}, we obtain:

\begin{corollary} \label{bip_edges_3}
$2e(G)\geq 6n-5k+3x+y-z-6$.
\end{corollary}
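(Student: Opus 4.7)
The plan is to observe that this corollary is a direct arithmetic combination of the preceding lemma with the earlier edge estimate \eqref{eq:B_edges}, exploiting the fact that $A$ is independent.

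First, I would note that since $A$ is an independent set, $e(G|_A) = 0$, and every edge of $G$ is either an edge of $G|_B$ or an edge between $A$ and $B$. Consequently
\[
e(G) = m + e(G|_B).
\]
Next, I would invoke the previous lemma to get $2m \geq 2n - k + 3x + y - z$, and \eqref{eq:B_edges} to get $2e(G|_B) \geq 4(n-k) - 6$. Doubling the displayed identity and summing these two bounds yields
\[
2e(G) = 2m + 2e(G|_B) \geq (2n - k + 3x + y - z) + (4n - 4k - 6) = 6n - 5k + 3x + y - z - 6,
\]
which is exactly the claimed inequality.

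There is essentially no obstacle here: the content lies in the previous lemma (the red-edge counting argument) and in Theorem~\ref{thm:independent} applied to $G|_B$, both of which are already established. The only subtlety worth flagging explicitly is the use of independence of $A$ to decompose $e(G)$, which is what makes the two bounds additive in the desired way.
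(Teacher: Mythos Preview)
Your argument is correct and matches the paper's own derivation: the corollary is obtained precisely by combining the previous lemma's bound on $2m$ with twice the inequality \eqref{eq:B_edges}, using $e(G)=m+e(G|_B)$.
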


We can now complete the proof of Theorem~\ref{thm:bipartite}.

\begin{proof}
Lemma~\ref{bip_edges_1} established the estimate \begin{equation}
    \label{eq:final1}
    e(G) \geq 2n + 3z - 3.
\end{equation}
Corollaries~\ref{bip_edges_2} and~\ref{bip_edges_3} provided the estimates 
\begin{equation}
    \label{eq:final2}
    e(G) \geq 2n + 4k -2x - y - 3
\end{equation}
and 
\begin{equation}
    \label{eq:final3}
    2e(G) \geq 6n - 5k + 3x + y - z - 6.
\end{equation}

Taking a weighted sum of \eqref{eq:final1}, \eqref{eq:final2} and \eqref{eq:final3} with coefficients 1, 12, and 9 respectively, we have:

\[
\begin{split}
31e(G) &= e(G) + 12e(G) + 9\cdot2e(G) \\
&\geq 2n + 3z - 3 + 12(2n+4k-2x-y-3) \\
&\quad + 9(6n-5k+3x+y-z-6) \\
&= 80n + 3k - 6z + 3x - 3y - 93 \\
&> 80n - 134,
\end{split}
\]
where the last inequality holds because $3k + 3x - 6z - 3y \geq 0$ follows from $k \geq z+y$ and $x \geq z$.

Thus, we conclude that $e(G) > \frac{80}{31}n - \frac{134}{31}$, which contradicts our initial assumption about $G$.
\end{proof}

\section{Conclusion}

The results presented in this paper required an extensive search of systems of inequalities and corresponding linear programming problems. This work can be continued, and the estimates can be further improved, but we believe it unlikely that such an approach will lead to a proof of Conjecture \ref{cnj:main} unless fundamentally different ideas are used. 

% The results presented in this paper required an extensive search of systems of inequalities and corresponding linear programming problems. This work can be continued and the estimates can be further improved, but we consider it unlikely that such an approach will lead to a proof of Hypothesis 1 unless fundamentally different ideas are used. 

% Note that if Hypothesis 1 is true, then the estimate of the number of edges for the existence of a bipartite cut cannot be different from the case of a forest cut, since a 3-tree on n vertices has 3n-6 edges, and any 3-tree cut contains a triangle. On this basis, we propose to consider a relaxed formulation.

% Hypothesis 2. If in a graph G on $n$ vertices, $n \geq 4$ is less than 3n-6 edges, then there exists a bipartite cut in it.

% Note that an $s$-tree on $n$ vertices has $sn-s(s+1)/2$ edges and does not admit a cut free of $K_{s-1}$. We propose a formulation that is even weaker at $s=3$ than Hypothesis 2, but admits a generalisation to arbitrary $s$.

% Hypothesis 3. If a graph G has $n$ vertices, $n \geq s+1$ less $sn-s(s+1)/2$ edges, then there exists a cut free of $K_{s-1}$ in the graph.

\section*{Acknowlegments}

This work was basically done during the workshop ``Open Problems in Combinatorics and Geometry IV'', Adygea, Dakhovskaya, October 13-27, 2024. The workshop was organised by the Moscow Institute of Physics and Technology with the support of the Caucasus Mathematical Center of Adyghe State University.

The authors are grateful to Alexander Polyanskii for highlighting this problem.

\bibliographystyle{abbrv}
\bibliography{main}

% \begin{thebibliography}{10}
	
% 	\bibitem{independent_cut}
% 	Chen G., Yu X. A note on fragile graphs //Discrete mathematics. – 2002. – Т. 249. – №. 1-3. – С. 41-43.

% \end{thebibliography}
\end{document}